\documentclass[reqno]{amsart}
\usepackage{graphicx} 
\usepackage{mathpazo}
\usepackage{microtype}
\usepackage{tabularx}
\usepackage[utf8]{inputenc}
\usepackage{hyperref}
\usepackage{xcolor}
\usepackage{amsmath,amsthm,amssymb,mathtools,bbm}
\usepackage[foot]{amsaddr}
\usepackage{graphicx}
\usepackage{upgreek}
\usepackage[normalem]{ulem} 
\usepackage[dvipsnames]{xcolor}
\usepackage{mathrsfs}
\usepackage{ytableau} 
\usepackage{tikz}
\usepackage{braids}
\usetikzlibrary{arrows}
\usetikzlibrary{braids}
\usepackage{float}
\usepackage[all,2cell]{xy}
\UseAllTwocells 
\usepackage{blkarray}
\newcommand{\ignore}[1]{}
\newcommand{\bra}[1]{\langle #1 |} 
\newcommand{\ket}[1]{| #1 \rangle}
\newcommand{\mset}[1]{\{\!\{#1\}\!\}}
\theoremstyle{definition}
\newtheorem{theorem}{Theorem}[section]
\newtheorem{proposition}[theorem]{Proposition}
\newtheorem{lemma}[theorem]{Lemma}
\newtheorem{definition}{Definition}[section]
\newcommand{\C}{\mathbb{C}}
\newcommand{\Q}{\mathbb{Q}}
\newcommand{\N}{\mathbb{N}}
\newcommand{\BrSys}{\mathsf{BrSys}}
\DeclareMathOperator{\End}{End}
\newtheorem{example}{Example}
\DeclareMathOperator{\Aut}{Aut}
\newcommand{\ppm}[1]{\textcolor{brown}{#1}}
\newcommand{\ppmm}[1]{{#1}}
\newcommand{\ecr}[1]{\textcolor{ForestGreen}{#1}}

\newcommand{\infeq}{$\infty$-equivalent}
\newcommand{\infeqe}{$\infty$-equivalence}
\newcommand{\CCwg}{CCwg}
\newcommand{\CCWG}{CCwg}

\begin{document}

\title[Equivalences among Constant YBOs]{Notions of equivalence among constant Yang--Baxter Operators:         \\        To $\infty$-Equivalence and Beyond}
 \author{Paul P. Martin $^1$}  
 \address{$^1$ School of Mathematics, University of Leeds}
\address{$^2$ Department of Mathematics, Texas A\&M University}
\author{Eric C. Rowell $^{1,2}$}
\begin{abstract}
   Recent advances in the classification of certain classes of Yang-Baxter operators through a holistic categorical approach has some of us wondering about ways to identify solutions.  In particular the classification of charge conserving solutions in all dimensions inspires the question: \emph{what fraction of solutions are \textbf{close to} charge conserving solutions, under any \textbf{reasonable} notion of equivalence?}  Of course \emph{reasonable} and \emph{close to} are open to interpretation.  Here we suggest some precise notions of both, and prove that in dimension $2$, 10 out of 11 local families of solutions are close to charge conserving, and the one that misses is a singleton.
\end{abstract}
\maketitle

\tableofcontents 

\section{An Enlightened Approach to Equivalence}

The dual-edged problem of 
{classifying} and constructing
complex matrix solutions to the 
{constant}
Yang-Baxter equation 
{is} interesting, yet computationally
daunting.  
{One immediate question is:} 
what do we mean by classification, i.e., \emph{up to what notion of equivalence?}  
This is a classification problem in higher representation theory \cite{MRT25}, where, unlike in ordinary representation theory, there is not a canonical notion of equivalence.  

Classification is about completeness and about understanding. 
For the latter,
 an indigestibly large set of classes is unlikely to be of much {practical} use to 
{us.}  In this article we study several types of equivalence, attempting to be as broad-minded as possible. {This is in service to the goal of providing a palatable yet satisfying classification.}

\medskip 

\newcommand{\Braid}{Braid}
\newcommand{\Mat}{Mat}

Let $R$ and $S$ be two 
constant 
Yang-Baxter operators (YBOs), and denote by $\rho_n^R$ and $\rho_n^S$ the corresponding sequences of braid group representations.  
We say $R,S$ are  \textbf{$\infty$-equivalent} if there exists a sequence of 
invertible matrices $T_n$ 
so that $T_n\rho_n^R(\beta)=\rho_n^S(\beta)T_n$ for all $n\geq 2$ and $\beta\in B_n$.  In categorical language this is the same as saying that the corresponding strict, strong monoidal functors $F_R,F_S:\Braid\rightarrow \Mat$ 
{(also known as braid representations)}
are naturally (but not necessarily monoidally) equivalent \cite{maclane2013categories,adamek2004abstract}.  
Here $\Braid$ is Mac Lane's strict monoidal category of braids, monoidally generated by one of the elementary braids $\sigma \in \Braid(2,2)$. 
From the perspective of representation theory, $\infty$-equivalence is the most appropriate notion 
of equivalence; however
 specifying
 an $\infty$-equivalence, in general, requires an infinite amount of information.  
On the other hand, many classification results in the literature (see, e.g. \cite{Hietarinta92,ESS}) consider equivalence up to local basis change together with some natural symmetries rather than $\infty$-equivalence, possibly for this reason.  In the set-theoretical community (see e.g., \cite{AkgunMerebVendramin2022}) these basis changes are usually restricted even further to relabelings, i.e., basis permutations.  We are confronted, as is often the case in mathematics, with a situation akin to the joke about the drunk man on his hands and knees looking for his house keys under a streetlight, as in Figure \ref{drunkkeys}.
A passer-by asks, “Did you lose your keys here?”
The drunk replies, “No, I lost them over there.”
“So why are you looking here?”
“Because the light is better here.”

\begin{figure}[ht]\includegraphics[width=9cm]{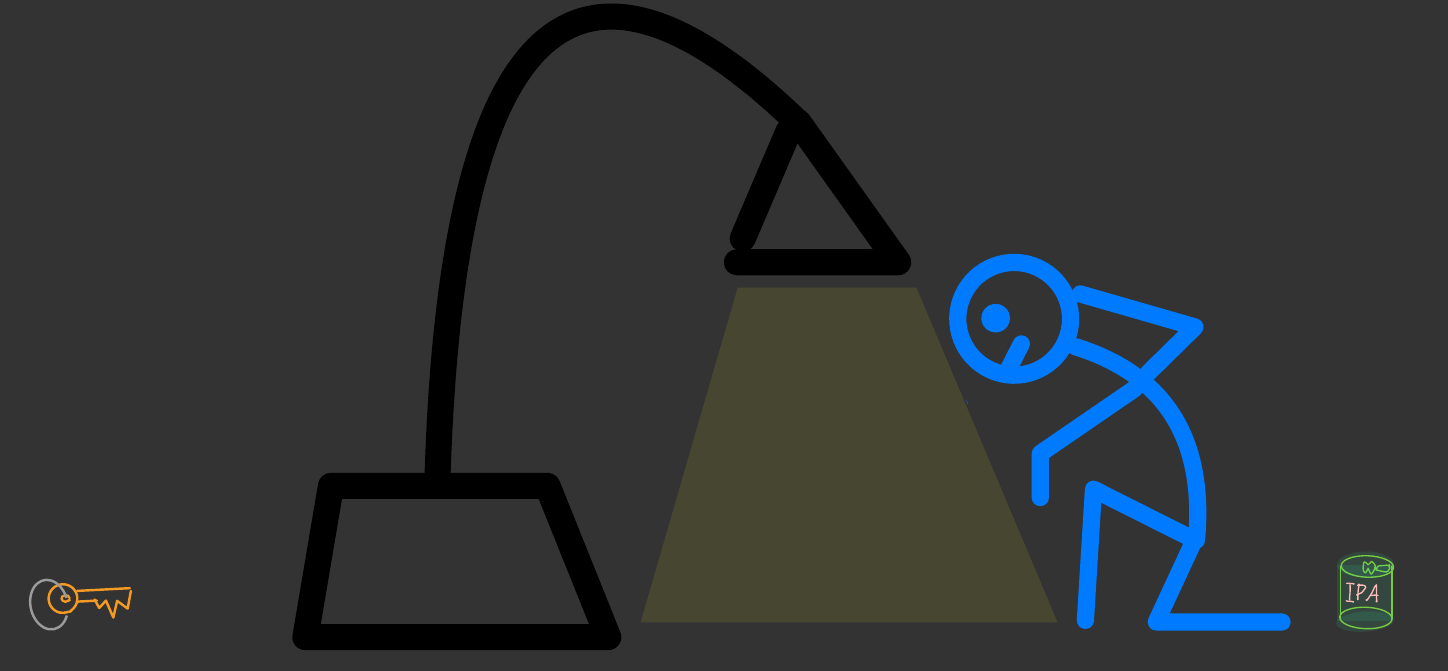}\caption{\label{drunkkeys} We sometimes find ourselves looking for solutions where the light is better, rather than exploring the {more likely, but more daunting,} shadows for the best answer.}\end{figure}

The main goal of this article is to shed some light on various forms of equivalence and the relationships amongst them.  We are particularly interested in ways to show two YBOs are $\infty$-equivalent.  In more detail this article achieves the following: 

\begin{enumerate}
  \item We provide a simple algorithm for determining when a YBO is locally equivalent to a CC YBO in Theorem \ref{thm:locallyCCcriterion}.
\item We give a proof that in rank-2 all but precisely one YBOs are \infeq\ to charge-conserving with glue \CCWG\ solutions:  
Theorem~\ref{thm:uniqueinftyclassinrank2}, {up to transpose symmetry.}
    \item We show that $3$-equivalence is insufficient for $\infty$-equivalence: Example \ref{ex:3notsufinfty}
    \item We prove that for CC solutions $X$-equivalence implies $\infty$-equivalence: Theorem \ref{thm:Ximpliesinfty}
    \item We illustrate in Example \ref{ex:ACCexample} that in rank 3 \emph{additive charge conserving} YBOs may fail to be $\infty$-equivalent to a \CCWG\ YBO.
    \item We show that 1-step $DS$-moves are not automatically transitive in Example \ref{ex:texas2step}.
    \item We describe a more general Morita-like form of equivalence inspired by \cite{RWlocalization}, illustrated with two solutions of distinct ranks that are nonetheless equivalent in \ref{ex:BrSysequiv}
  
\end{enumerate}

\textbf{Acknowledgements} The work of PPM was partially supported by the EPSRC Programme Grant 
EP/W007509/1.  The research of ECR was partially supported by a Royal Society Wolfson Visiting Fellowship and US NSF grant DMS-2205962.

\section{Equivalences and Symmetries of Many Stripes}

\newcommand{\breath}{\medskip \newline}

{In this section we discuss the organisational tools available in the classification of constant YBOs.} 

{In principle these tools are analogous to the tools available in the classification of representations of, say, a group. In practice, higher representation theory is less canonical than ordinary representation theory, so all of the following analogues require great care in upscaling, but the analogy is still useful  for guiding us through this section. Thus an initial organisation of representations of a group can be by field, and then dimension; further organisation is achieved by grouping representations into isomorphism classes; there may be  simple invertible transformations between representations that do not preserve isomorphism, but still collect representations into natural orbits; and it may even be useful to group according to whether two representations are identically equal maps (for example representations defined by different constructions).}

{In group representation theory it is, ultimately, isomorphism which does most of the work. There is no canonical upscaling of this notion. Instead both $\infty$-equivalence and local equivalence are analogues - with local equivalence  stricter and easier to verify, but $\infty$-equivalence producing a much more manageable set of classes. In \S\ref{ss:m} we break down testing for $\infty$-equivalence into steps more like local equivalence.}

{In the same spirit of providing easier steps towards $\infty$-equivalence, this time sufficient but not necessary, in \S\ref{ss:DSL} we} {describe a method, due to Doiku and Smoktunowitz \cite{DS}, for producing $\infty$-equivalences to a given solution based on its local automorphism group.} 

{In \S\ref{ss:Gal} we give examples of `symmetry'. Easy invertible transformations that construct new representations from old, but do not in general preserve $\infty$-equivalence.}

\medskip 

As noted  in the Introduction 
above, 
in categorical terms a constant YBO is 
effectively the same data as  
\emph{a braid representation} - a strict monoidal functor $F:\Braid \rightarrow \Mat$. 
In these terms, the first and most fundamental layer of classification is 
not so challenging - 
simply by the 
{\em rank}, the  
image of the object 1. Here we write $N$ for $F(1)$. 
Thus a rank-$N$   YBO is a $N^2 \times N^2 $ matrix. 
It is immediate that different rank implies different $\infty$-equivalence class. 
We defined rank as the image $F(1) =N$; but the term rank is sometimes used for the $n$ in $B_n$. So if there is risk of ambiguity we will refer to $N$ as big-rank and $n$ as little-rank. 

\medskip 
\subsection{$m$-Equivalence}  \label{ss:m}
A kind of poor man's version of $\infty$-equivalence is \textbf{$m$-equivalence} \cite{MRT25} {for some $m \in \N$}:  
we say $R$ and $T$ are $m$-equivalent if
there exists a set of 
invertible matrices $\{T_n: 1\leq n \leq m\}$ 
so that $T_n\rho_n^R(\beta)=\rho_n^S(\beta)T_n$ for all $\beta\in B_n$ 
for $n \in 1,2,\ldots,m$. 
(Note that this yields \infeqe\ in the limit.) 

Observe that 1-equivalence is essentially the rank condition (we have no $R$ matrix 
in this little rank - the braid group is the trivial group).
And obviously $m$-equivalence implies $m\!-\!1$-equivalence, meanwhile the converse is certainly false for small $m$. 
Then 2-equivalence requires that $R$ and $S$ have the same 
Jordan form. 
If our YBO's are unitary (or otherwise diagonalisable) then this says simply that they have the same spectrum. 
And it will be clear that different spectra implies not 2-equivalent, so this is a very 
convenient next test of equivalence between 1-equivalence and 2-equivalence.

The reader's curiosity will doubtless have led them to wonder if there is some finite $m$ such that $m$-equivalence implies $\infty$-equivalence (see also \cite[Paragraphs 7.2-7.3]{MRT25}), as this would render the burden of specifying an $\infty$-equivalence a finite list of $T_n$\ppm{'s}.  Counterexamples for $m=2$ are found in \emph{loc. cit.} and in \cite{LPW}, where a conjecture in the involutive case is attributed to \cite{Gur}.  We see that $m=3$ is also insufficient in the following:

\begin{example}\label{ex:3notsufinfty}

\ignore{{

  \ecr{put in brackets}    Unpacking the notation a bit, we have 
    \[ 
    R=I_2\boxplus -I_1\boxplus-I_1\boxplus-I_1\boxplus-I_1 
    \]  
    and \(S= I_1\boxplus I_1\boxplus I_1\boxplus I_1 \boxplus -I_2\), 
    where $I_j$ is the $j\times j$ identity matrix and, for $a^2\times a^2$ and $b^2\times b^2$ matrices $A,B$ we have the binary operation 
    $(A,B)\mapsto A \boxplus B$  defined by
    \[
    (A\boxplus B)\ket{ij}=\begin{cases}
        A\ket{ij} & i,j\leq a\\
        B\ket{(i-a)(j-a)} & a<i,j\leq a+b\\
        \ket{ji} & \text{else}
    \end{cases}.\]  This operation is associative up to canonical isomorphism, see \cite[Lemma 4.2(a)]{LPW}.    \ecr{maybe just give this style description}


}}
Consider the rank $6$ involutive solutions $R$ and $S$ associated with the pairs of Young diagrams $([2],[1,1,1,1])$ and $([1,1,1,1],[2])$ as in \cite{LPW}.
Unpacking their notation we have the following explicit descriptions of $R$ and $S$.  Let $\ket{0},\ldots,\ket{5}$ be a basis of $\C^6$, and extend to $\C^6\otimes \C^6$.  Then, $R$ acts by $1$ on $\ket{00},\ket{01},\ket{10}$ and $\ket{11}$, by $-1$ on $\ket{22},\ldots,\ket{55}$ and by $\ket{ij}\mapsto \ket{ji}$ on all other basis elements.  Similarly, $S$   acts by $1$ on $\ket{00},\ldots,\ket{33}$ by $-1$ on $\ket{44},\ket{45},\ket{54}$ and $\ket{55}$ and by $\ket{ij}\mapsto \ket{ji}$ on all other basis elements.
 
 Since $R$ and $S$ are involutive, $\rho_n^R$ and $\rho_n^S$ factor over the symmetric group $\Sigma_n$.  
    Thus to check that $\rho_3^R$ and $\rho_3^S$ are equivalent we may simply compute traces for the images of the 3 conjugacy classes of $\Sigma_3$, for which we obtain $216,0,12$ 
    (trivial, 2-cycle, 3-cycle respectively)
    for both $R$ and $S$.  On the other hand, the general results of \cite{LPW} show that $R$ and $S$ are not  $\infty$-equivalent.  In fact, one computes that for $n=4$ the traces disagree on the conjugacy class of $4$-cycles--one gets $12$ and $-12$, respectively.
\end{example}

\subsection{DS(L) Equivalence}  \label{ss:DSL}
Two notions of equivalence that require only a finite amount of information to specify, yet imply $\infty$-equivalence are local equivalence $L$ and DS-equivalence.  These are as follows: $R$ and $S$ are \textbf{locally equivalent} if there exists an invertible matrix $A$ with $(A\otimes A)R=S(A\otimes A)$.  We will write such local equivalences as $L_A(R)=(A\otimes A)R(A\otimes A)^{-1}.$ The categorical interpretation is that of a \emph{monoidal} natural equivalence between the corresponding functors.  In this case the $\infty$-equivalence is achieved by $A^{\otimes n}\rho_n^R=\rho_n^SA^{\otimes n}$.  

DS-equivalence is more subtle, as 
{the construction of the class of $R$}
depends on the YBO itself: suppose $T$ is invertible matrix such that $T\otimes T$ commutes with $R$. Then  $(I\otimes T)R(I\otimes T)^{-1}$ and $(T\otimes I)R(T\otimes I)^{-1}$ both satisfy the YBE again, and yield $\infty$-equivalent YBOs. We shall denote the latter transformation by $DS_T(R)=(T\otimes I)R(T\otimes I)^{-1}$ which will be called a \textbf{DS-move}. Then we say that $R$ and $S$ are \textbf{DS-equivalent} (after Doiku and Smoktunowitz \cite{DS}) if there is a sequence of DS-moves carrying $R$ to $S$. For a single DS-move $S=DS_T(R)$ the corresponding $\infty$-equivalence is given by $T_n:=I\otimes T\otimes\cdots\otimes T^{\otimes (n-1)}.$  
{Observe that DS-equivalence is indeed an equivalence relation - the DS-move relation is symmetric and reflexive, and the sequence gives transitive closure.}

It can of course happen that two YBOs are related by a composition of local and DS-equivalences, while being neither locally equivalent or DS-equivalent.  This can happen, for example, if there are two invertible matrices $A,T$ such that $T\otimes T$ commutes with $R$ and $(A\otimes AT)R=S(A\otimes AT)$.  Then the corresponding $\infty$-equivalence is given by $T_n=A^{\otimes n}(I\otimes T\otimes\cdots\otimes T^{n-1})$.  We will call such moves {(1-step)} \textbf{DS-local}, or DSL for short. {As above, we will say that $R$ and $S$ are \textbf{DSL-equivalent} if there is a sequence of DSL-moves relating $R$ to $S$.} It is conceivable that two YBOs require a multi-step chain of DSL moves.  We will see that this is the case in Example \ref{ex:texas2step}.  On the other hand, if $R$ and $S$ are related by a sequence of 1-step DSL moves, the equivalence can be accomplished by first applying a sequence of DS-moves and then applying a local equivalence at the end.  The reason is the following easy:
\begin{lemma}
    Let $R$ be an $N^2\times N^2$ YBO and $A\in GL_N$.  Then $T\otimes T$ commutes with $$A\otimes AR(A\otimes A)^{-1}$$ if and only if $A^{-1}TA\otimes A^{-1}TA$ commutes with $R$.
\end{lemma}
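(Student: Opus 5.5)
The plan is a direct conjugation computation; no earlier result from the paper is needed. Write $R' := (A\otimes A)R(A\otimes A)^{-1}$ and set $T' := A^{-1}TA$. The one structural fact used is the mixed-product rule for Kronecker products, $(X\otimes Y)(Z\otimes W) = XZ\otimes YW$. Applied here, it gives
\[
(A\otimes A)^{-1}(T\otimes T)(A\otimes A) = A^{-1}TA\otimes A^{-1}TA = T'\otimes T'.
\]
Equivalently, $T\otimes T = (A\otimes A)(T'\otimes T')(A\otimes A)^{-1}$.

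Next we substitute this into the commutator condition. The statement $(T\otimes T)R' = R'(T\otimes T)$ becomes
\[
(A\otimes A)(T'\otimes T')R(A\otimes A)^{-1} = (A\otimes A)R(T'\otimes T')(A\otimes A)^{-1}.
\]
Multiplying on the left by $(A\otimes A)^{-1}$ and on the right by $(A\otimes A)$ gives $(T'\otimes T')R = R(T'\otimes T')$. This is exactly the assertion that $A^{-1}TA\otimes A^{-1}TA$ commutes with $R$.

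Both of these multiplications are invertible operations, since $A\otimes A$ is invertible with inverse $A^{-1}\otimes A^{-1}$. Hence every step reverses, and we obtain the ``if and only if''. Put more abstractly, conjugation by $A\otimes A$ is an algebra automorphism of $\mathrm{End}(\C^N\otimes\C^N)$, so it preserves commutation. It also carries the set $\{X\otimes X\}$ onto itself via $X\mapsto AXA^{-1}$.

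There is no genuine obstacle. The only point needing care is to keep the direction of conjugation straight, $A^{-1}TA$ rather than $ATA^{-1}$, which is dictated by $R'$ being $R$ conjugated by $A\otimes A$. Note also that the Yang--Baxter property of $R$ plays no role: the lemma holds for any $N^2\times N^2$ matrix $R$.
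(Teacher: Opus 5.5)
Your proof is correct and is essentially the paper's argument: the paper also conjugates the commutator $[T\otimes T,(A\otimes A)R(A\otimes A)^{-1}]$ by $A\otimes A$ to obtain $[A^{-1}TA\otimes A^{-1}TA,R]$. You also note that this conjugation is invertible, which gives the converse direction; the paper writes only the forward implication and leaves the converse implicit.
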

\begin{proof}
    If \[[T\otimes T,A\otimes AR(A\otimes A)^{-1}]=0\] then 
    \[(A\otimes A)^{-1}[T\otimes T,A\otimes AR(A\otimes A)^{-1}](A\otimes A)=[A^{-1}TA\otimes A^{-1}TA,R]=0.\]
\end{proof}
As a consequence, if $T\otimes T$ commutes with $R$ then $L_A\circ DS_T(R)=DS_{A^{-1}TA}\circ L_A(R)$ (and $DS_{A^{-1}TA}$ is a valid DS-move for $L_A(R)$).
In particular, if $S$ is obtained from $R$ as $S=L_{A_n}DS_{T_n}\circ\cdots\circ L_{A_1}DS_{T_1}(R)$ with each $T_i\otimes T_i$ commuting with the previous iteration, then we may write this as $S=L_{B}\circ DS_{M_n}\circ \cdots \circ DS_{M_1}(R)$, that is, we may achieve this transformation by performing a sequence of DS-moves followed by a single local move.

As we mentioned in the introduction $\infty$-equivalences may be of a particularly simple type: DSL.  Here we give an example, {noting that the original proof in \cite{AlmateariMartinRowell} uses an inductive construction of the intertwiners.

\begin{example}\label{ex:aDSLequivrank2}
    Let \[R= \left[ \begin {array}{cccc} 1&0&0&1\\ \noalign{\medskip}0&0&1&0
\\ \noalign{\medskip}0&1&0&0\\ \noalign{\medskip}0&0&0&1\end {array}
 \right] ,\quad S=\left[ \begin {array}{cccc} 1&0&0&1\\ \noalign{\medskip}0&0
&-1&0\\ \noalign{\medskip}0&-1&0&0\\ \noalign{\medskip}0&0&0&1
\end {array} \right]. \]
These are examples of charge-conserving-with-glue YBOs.  Next define 

\[T:= \left[ \begin {array}{cc} -1&0\\ \noalign{\medskip}0&1\end {array}
 \right],\quad A:= \left[ \begin {array}{cc} i&0\\ \noalign{\medskip}0&1\end {array}
 \right] 
 .\]
One checks that $T\otimes T$ commutes with $R$.  Moreover, $Z:=(A\otimes AT) = diag(1,i,-i,1)
$ satisfies $ZR=RZ$.  Thus $R$ and $S$ are DSLly equivalent.  These lie in distinct local classes, cf. \cite{Hietarinta92}.
\end{example}

 For a slightly more thrilling example consider the following:

\begin{example}\label{ex:rank3gaussian}
Define parameters
\[
  \eta=e^{\pi i/6}=\frac{\sqrt3+i}{2},
  \;
  \omega=\eta^4=e^{2\pi i/3}, \;  x=-\frac{\eta}{\sqrt3},
  \;
  y=1+x.
\]
 On \(\mathbb C^3\), and for $j\in \mathbb Z/3\mathbb Z$, let
\[
  Xe_j=e_{j+1},\; Ze_j=\omega^j e_j,
 \; \text{and} \; P=X\otimes Z.
\] 
Here $X,Z$ are the familiar Pauli matrices from quantum information science. 

The two rank \(3\) Gaussian Yang--Baxter operators from
\cite[Example 7.5]{MRT25} may be written compactly as
\[
  R=xI_9+yP_R+yP_R^2,
  \qquad
  \mbox{ {where} } \qquad 
  P_R=XZ^2\otimes XZ,
\]
and
\[
  S=xI_9+yP_S+yP_S^2,
  \qquad
  P_S=X\otimes Z.
\] It was shown, using a Temperley-Lieb algebra representation argument, that these 
 two operators are \(\infty\)-equivalent. A more elementary direct proof is as follows: define
\[
  A=
  \frac1{\sqrt3}
  \begin{bmatrix}
  i&i&\eta^{-1}\\
  \eta^{-1}&-\eta&\eta^{-1}\\
  \eta^{-1}&i&i
  \end{bmatrix},
  \qquad
  T=
  \begin{bmatrix}
  0&\omega&0\\
  \omega^2&0&0\\
  0&0&1
  \end{bmatrix}.
\]
Then
\[
  (T\otimes T)S=S(T\otimes T)
\]
and, with \(B=A\otimes(AT)\),
\(
  BSB^{-1}=R.
\)  We remark that $R$ and $S$ are not locally equivalent.

Incidentally, both \(A\) and \(T\) are one-qutrit \emph{Clifford} matrices, that is, they normalize the group of 1-qutrit Pauli gates $\{\omega^aX^jZ^k:0\leq a,j,k\leq 2\}$. 
\end{example}

The following shows that 2-step DS(L) equivalences may sometime be unachievable by means of 1-step DSL equivalences:

\begin{example}\label{ex:texas2step}
Define 
\begin{align*}
D =
\begin{bmatrix}
-1 & 0 & 0 & 0\\
0 & 0 & 1 & 0\\
0 & 1 & 0 & 0\\
0 & 0 & 0 & -1
\end{bmatrix}, \quad
T =
\begin{bmatrix}
1 & 0\\
0 & -1
\end{bmatrix},
\quad \text{and}\quad
S =
\begin{bmatrix}
1 & 1\\
0 & 1
\end{bmatrix}.
\end{align*}

One finds that $T\otimes T$ commutes with $D$, so we obtain another solution
\[
R'=DS_T(D)=(T\otimes I)\,D\,(T\otimes I)^{-1}=\begin{bmatrix}
-1 & 0 & 0 & 0\\
0 & 0 & -1 & 0\\
0 & -1 & 0 & 0\\
0 & 0 & 0 & -1
\end{bmatrix},
\] i.e., a scalar multiple of the ``flip" matrix.
Since any $2\times 2$ matrix $M$ has $[M\otimes M,R']=0$ any $DS_M(R')$ is a YBO.
In particular,
\[
R''=DS_S(R')=(S\otimes I)\,R'\,(S\otimes I)^{-1}
=
\begin{bmatrix}
-1 & -1 &  1 &  1\\
 0 &  0 & -1 & -1\\
 0 & -1 &  0 &  1\\
 0 &  0 &  0 & -1
\end{bmatrix}
\]
is a YBO. Thus $D$ and $R''$ are related by 2 DS-moves.

Now we look for a $1$-step DSL-move between $D$ and $R''$, i.e., a solution to  the system
\[
(M\otimes M)D=D(M\otimes M),\;
R''=(CM\otimes C)\,D\,(CM\otimes C)^{-1},\;
\det(C)\det(M)\neq 0, 
\]
with $C,M$ both $2\times 2$ matrices.  A simple Gr\"oebner basis calculation shows this is impossible.
\end{example}

\subsection{Galois Symmetry}  \label{ss:Gal}
 Suppose that $R$ is a YBO defined over some field extension $K$ of $\Q$, and $\varphi\in\Aut(K/\Q)$ is a field automorphism of $K$ fixing $\Q$.  Then plainly $\varphi(R)$ remains a YBO.  Of course $R$ and $\varphi(R)$ will not typically be $\infty$-equivalent or even $2$-equivalent, but one can certainly be recovered from the other.  We will say that solutions $R$ and $S$ are \textbf{Galois equivalent} if there is a field automorphism $\varphi$ of the type described above carrying $R$ to $S$.  Let us enjoy the following example together:
\begin{example}
   Continue with the notation as above and set $P=P_S=X\otimes Z$.  Unitary YBOs of the form $a\cdot I+b\cdot P+c\cdot P^2$ have appeared in the literature, going back at least to Goldschmidt and Jones \cite{GJ}, and are sometimes called Gaussian solutions \cite{GalindoRowell}.  

Let us describe all such solutions.  Firstly, one finds that there are no solutions with $a=0$. Thus we may normalise to assume $a=1$, understanding that an overall scalar factor can be introduced later to ensure unitarity.  We define $R(b,c)=I+b\cdot P+c\cdot P^2$.  With this we find $6$ nontrivial solutions, as the YBE is equivalent to the simultaneous vanishing of $c^3 - 1$ and $bc^2 + b^2 + c$.  The solutions are given by: \[(b,c)\in \{(\omega^i,\omega^i),(1,\omega^i),(\omega^i,1)\}\] for $i=1,2$.  In particular, all solutions are in the field $\Q(\omega)$. 

We explore these six solutions as follows: first, we use scalar multiplication and Galois conjugation to ensure obtain solutions that have identical eigenvalues: $1$ with multiplicity $3$ and $\omega$ with multiplicity $6$.  

Next we look for local equivalences among these solutions.  We find that this yields two non-locally equivalent classes, represented by $Ra:=\frac{i}{\sqrt{3}}R(\omega^2,\omega^2)$ and $Rb:=\frac{i}{\sqrt{3}}\varphi(R(\omega,\omega))$ where $\varphi:\omega\mapsto\omega^{-1}$ determines $\varphi$.  Miraculously, $Ra$ and $Rb$ are $DS$-equivalent: Setting $A=\left[ \begin {array}{ccc} 1&0&0\\ \noalign{\medskip}0&0&1
\\ \noalign{\medskip}0&1&0\end {array} \right] 
$ we find that $(A\otimes I)Ra=Rb(A\otimes I)$.
In particular we find that up to rescaling, Galois and $\infty$-equivalence this is the only solution of this form.  Of course, the solutions of Example \ref{ex:rank3gaussian} are in this class.
  \end{example}

The rank $3$ solution $Ra$ is analogous to the lone Hietarinta solution in rank $2$ that is not $\infty$-equivalent to any other of Hieterinta's solutions.  It is an interesting question to define and classify the appropriate class of solutions in all ranks.

\section{Is Your Yang-Baxter Operator Secretly Charge-Conserving?}
{As a complete classification of charge-conserving solutions is available \cite{MartinRowell}, we would be fool{ish} not to ask how broad a swath they cut 
{through the general problem, up to the various known symmetries, including} 
both locally and up to $\infty$-equivalence. }
{In this section we first give an algorithm for determining whether a given braid representation is locally to a charge-conserving (CC) representation and apply it to well-known non-CC representations.}  Then we provide a proof that the YBO associated with the 8 vertex model \cite{Hietarinta92} is $\infty$-equivalent (but not locally equivalent) to a CC YBO.  Finally we prove a recent conjecture that conjugation by a diagonal matrix, a known symmetry of the set of CC YBOs \cite{MartinRowell}, implies $\infty$-equivalence. 

\medskip 

Denote the standard ordered basis for $V:=\C^N$ by $\{\ket{i}:1\leq i\leq N\}$ and extend to $V^{\otimes k}$ as $\{\ket{i_1\cdots i_k}:1\leq i_j\leq N\}$.  The \emph{charge} of such a basis vector $\ket{i_1\cdots i_k}$ is the multiset $\mset{i_1,\ldots,i_k}$ of indices. Let $V^{\mset{i_1,\ldots,i_k}}$ denote the span of all basis vectors with charge $\mset{i_1,\ldots,i_k}$. 
An $N^k\times N^k$ matrix $M$ is 
{rank-$N$}
\textbf{charge conserving} if it preserves charge, that is, if $V^{\mset{i_1,\ldots,i_k}}$ is $M$-invariant. {Structurally, such an $M$ is block diagonal with respect to a basis grouped by charge.} A charge-conserving, or CC, Yang-Baxter operator is a Yang-Baxter operator that is charge-conserving as a $N^2\times N^2$.  

{Fix $N$.}
We say that two operators $L,M$ on $V^{\otimes k}$ are \textbf{locally equivalent} if there is an invertible operator $A$ on $V$ such that $A^{\otimes k}L(A^{-1})^{\otimes k}=M$, generalising the $k=2$ 
definition above.

\subsection{Locally Equivalent to Charge-Conserving}
We have another characterisation of charge-conserving matrices, which will shortly be turned into a tight little algorithm. Fix $k,N \in \N$. 
   For an $N\times N$ matrix $A$ define $A_i=I\otimes\cdots\otimes A\otimes\cdots \otimes I$ ($k$ factors) where the $A$ appears in the $i$th position. Next define 
   $$
   \Delta_k(A)=\sum_{i=1}^kA_i    . 
   $$ 

\begin{lemma}  Let $N,k$ be integers.
   An $N^k\times N^k$ matrix $M$ is rank $N$ charge conserving if and only if $[M,\Delta_k(H)]=0$ for all diagonal $N\times N$ matrices $H$.
\end{lemma}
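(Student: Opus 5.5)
The plan is to compute how $\Delta_k(H)$ acts on the standard basis and then compare eigenspaces. Take $H=\mathrm{diag}(h_1,\ldots,h_N)$. Then
\[
\Delta_k(H)\ket{i_1\cdots i_k}=(h_{i_1}+\cdots+h_{i_k})\ket{i_1\cdots i_k},
\]
so $\Delta_k(H)$ is diagonal in the standard basis. Its eigenvalue on $\ket{i_1\cdots i_k}$ depends only on the charge $\mset{i_1,\ldots,i_k}$. In particular, each charge space $V^{\mset{i_1,\ldots,i_k}}$ lies inside a single eigenspace of $\Delta_k(H)$, and the eigenspaces of $\Delta_k(H)$ are direct sums of charge spaces.

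For the forward direction, suppose $M$ is charge conserving, so $M$ preserves every $V^{\mset{\cdot}}$. On each such space $\Delta_k(H)$ acts by a scalar. Hence $M\Delta_k(H)=\Delta_k(H)M$ holds on every charge space, and these spaces span $V^{\otimes k}$, so the commutator vanishes.

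For the converse, suppose $[M,\Delta_k(H)]=0$ for every diagonal $H$. Then $M$ preserves every eigenspace of every $\Delta_k(H)$. The key step, which is really the only point needing care, is to choose a single $H$ whose eigenvalue distinguishes distinct charges. I would take $h_i=x^{i-1}$ with $x\ge k+1$ an integer, say $x=k+1$, or alternatively take the $h_i$ linearly independent over $\Q$. A charge is determined by its multiplicities $(m_1,\ldots,m_N)$ with $0\le m_j\le k$, and the corresponding eigenvalue is $\sum_j m_j h_j=\sum_j m_j x^{j-1}$. By uniqueness of base-$x$ expansion, distinct multiplicity vectors give distinct eigenvalues. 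For this $H$ the eigenspaces of $\Delta_k(H)$ are therefore exactly the charge spaces $V^{\mset{i_1,\ldots,i_k}}$. Since $M$ commutes with this $\Delta_k(H)$, it preserves these eigenspaces, so $M$ is charge conserving.

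An alternative to the single generic $H$ is to use the $N$ elementary matrices $H=E_{jj}$. Here $\Delta_k(E_{jj})$ acts on a basis vector by the multiplicity of $j$ in its charge. Intersecting the eigenspaces of these commuting diagonal operators recovers the charge spaces, and $M$ preserves each such intersection. Either route works, and the main, though mild, obstacle is just this separation argument.
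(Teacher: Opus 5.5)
Your proof is correct and follows the paper's argument. Both rest on $\Delta_k(H)$ being diagonal in the standard basis with eigenvalue $\sum_j h_{i_j}$ depending only on the charge, and your forward direction is the paper's; the only difference is in the converse. The paper argues by contrapositive, picking for a nonzero entry $\bra{t_1\cdots t_k}M\ket{i_1\cdots i_k}$ between distinct charges an $H$ that separates just those two charges, whereas you fix a single $H$ with $h_i=(k+1)^{i-1}$ (the same choice the paper makes in proving Theorem~\ref{thm:locallyCCcriterion}) whose eigenspaces are exactly the charge spaces, which also shows that one diagonal $H$ already suffices.
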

\begin{proof}
   Let $H_{ij}=\delta_{ij}h_j$ be a diagonal matrix. Since $\Delta_k(H)\ket{i_1\cdots i_k}=\sum_{j=1}^kh_{i_j}\ket{i_1\cdots i_k}$, we see that $\Delta_k(H)$ is constant on $V^{\mset{i_1,\ldots,i_k}}$. Thus, if $M$ is charge conserving, $M$ commutes with $\Delta_k(H)$.  On the other hand, if $M$ is not charge conserving so that $\bra{t_1\cdots t_k}M\ket{i_1\cdots i_k}\neq 0$ for some $\mset{t_1,\ldots,t_k}\neq \mset{i_1,\ldots,i_k}$ we may find a diagonal $H$ so that $\sum_{j=1}^kh_{i_j}\neq \sum_{j=1}^kh_{t_j}$, so that $\Delta_k(H)$ does not commute with $M$.
\end{proof} 

The key is that, as $H$ ranges over all diagonal matrices,
the quantities $\sum_{j=1}^k h_{i_j}$ jointly distinguish
the multiset subspaces $V^{\mset{i_1,\ldots,i_k}}$.
This characterization suggests a test for determining when a matrix $S$ is locally equivalent to a charge conserving matrix $R$, as follows: 

\newcommand{\enzuigiri}{enzuigiri}  

\begin{theorem}\label{thm:locallyCCcriterion}
{(\enzuigiri\ method)}
Fix positive integers $k,N$.
An $N^k\times N^k$ complex matrix $S$ is locally equivalent
to a charge-conserving matrix if and only if there exists
an $N\times N$ complex matrix $M$ such that:
\begin{enumerate}
\item $[S,\Delta_k(M)]=0$;
\item $M$ has $N$ distinct eigenvalues
      $\lambda_1,\ldots,\lambda_N$;
\item for any nonnegative integer vectors
      $\mathbf{n}=(n_1,\ldots,n_N)$ and
      $\mathbf{m}=(m_1,\ldots,m_N)$ satisfying
      $\sum_i n_i=\sum_i m_i=k$, we have
      \[
      \sum_{i=1}^N n_i\lambda_i
      =
      \sum_{i=1}^N m_i\lambda_i
      \quad\Longrightarrow\quad
      \mathbf{n}=\mathbf{m}.
      \]
\end{enumerate}
\end{theorem}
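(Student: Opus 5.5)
The argument runs in both directions through the preceding Lemma, which characterises charge conservation as commuting with $\Delta_k(H)$ for every diagonal $H$. The basic identity is that for $A\in GL_N$ we have $A^{\otimes k}\Delta_k(H)(A^{-1})^{\otimes k}=\Delta_k(AHA^{-1})$, since conjugation by $A^{\otimes k}$ acts factorwise on each summand $H_i$. So if $R=(A^{-1})^{\otimes k}SA^{\otimes k}$, then $[R,\Delta_k(H)]=0$ is equivalent to $[S,\Delta_k(AHA^{-1})]=0$.

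For the forward direction, suppose $S=A^{\otimes k}R(A^{-1})^{\otimes k}$ with $R$ charge conserving. Choose a diagonal $H=\mathrm{diag}(\lambda_1,\ldots,\lambda_N)$ whose entries are $\Q$-linearly independent complex numbers, for instance $\lambda_i=\pi^i$ or any algebraically independent values. Then condition (3) holds, because $\sum n_i\lambda_i=\sum m_i\lambda_i$ forces $\mathbf n=\mathbf m$. Condition (2) is a special case of this. By the Lemma $[R,\Delta_k(H)]=0$, and conjugating gives $[S,\Delta_k(M)]=0$ with $M=AHA^{-1}$, which has the same eigenvalues as $H$.

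For the converse, suppose $M$ satisfies (1)--(3). By (2), $M$ is diagonalisable, so write $M=AHA^{-1}$ with $H=\mathrm{diag}(\lambda_1,\ldots,\lambda_N)$ in the standard basis. Set $R=(A^{-1})^{\otimes k}SA^{\otimes k}$. By the conjugation identity, $[R,\Delta_k(H)]=0$. The operator $\Delta_k(H)$ acts on $\ket{i_1\cdots i_k}$ by the scalar $\sum_j\lambda_{i_j}=\sum_i n_i\lambda_i$, where $n_i$ is the multiplicity of $i$ in the charge. Condition (3) says these scalars are pairwise distinct across distinct charges. Hence the eigenspaces of $\Delta_k(H)$ are exactly the charge sectors $V^{\mset{i_1,\ldots,i_k}}$. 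An operator commuting with a diagonalisable operator preserves each of its eigenspaces, so $R$ preserves every charge sector and is therefore charge conserving. Then $S=A^{\otimes k}R(A^{-1})^{\otimes k}$ is locally equivalent to $R$.

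The main point needing care is this converse step: one must see that a single $M$ suffices, even though the Lemma asks for commutation with all diagonal $H$. Condition (3) is what makes this work, since it makes the eigenspace decomposition of $\Delta_k(H)$ coincide with the charge decomposition, so commuting with this one operator already forces block diagonality. In the forward direction the only subtlety is choosing the eigenvalues generically enough to satisfy (3), and $\Q$-linear independence handles this.
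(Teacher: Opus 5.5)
Your proof is correct and follows the paper's argument essentially step for step: you diagonalise $M=AHA^{-1}$, transport $S$ by $A^{\otimes k}$, and use condition (3) to identify the eigenspaces of $\Delta_k(H)$ with the charge sectors. The only difference is cosmetic: you realise condition (3) with $\mathbb{Q}$-linearly independent eigenvalues such as $\pi^i$, whereas the paper uses the explicit integers $\lambda_i=(k+1)^{i-1}$ together with uniqueness of base-$(k+1)$ expansions.
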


\begin{proof}
Suppose such an $M$ exists. Since its eigenvalues are
distinct, we may write $M=A^{-1}HA$, where
$H=\operatorname{diag}(\lambda_1,\ldots,\lambda_N)$.
Set
\[
T=A^{\otimes k}S(A^{\otimes k})^{-1}.
\]
Then $[T,\Delta_k(H)]=0$.

For each nonnegative integer vector $\mathbf{n}$ with
$\sum_i n_i=k$, let $V_{\mathbf{n}}$ be the span of the
standard basis tensors in which the index $i$ occurs
exactly $n_i$ times.
The operator $\Delta_k(H)$ acts on $V_{\mathbf{n}}$
as the scalar $\sum_i n_i\lambda_i$.
By condition~(3), these scalars are distinct for distinct
$\mathbf{n}$. Thus the spaces $V_{\mathbf{n}}$ are
precisely the eigenspaces of $\Delta_k(H)$.
Consequently, $T$ preserves each $V_{\mathbf{n}}$,
so $T$ is charge conserving.

Conversely, suppose
$T=A^{\otimes k}S(A^{\otimes k})^{-1}$
is charge conserving. Choose
\[
\lambda_i=(k+1)^{i-1},
\qquad
H=\operatorname{diag}(\lambda_1,\ldots,\lambda_N).
\]
Uniqueness of base-$(k+1)$ expansions implies
condition~(3), since $0\le n_i\le k$.
As $T$ preserves every $V_{\mathbf{n}}$ and
$\Delta_k(H)$ acts as a scalar on each such space,
we have $[T,\Delta_k(H)]=0$.
Therefore $M=A^{-1}HA$ satisfies all three conditions.
\end{proof}

{The method is thus to determine the set of matrices $M$ satisfying (1) - an essentially linear problem - and then search for solutions to (2,3).}
The efficacy of this method is illustrated in the following example:

\begin{example}\label{ex:rank3gaussiannotcc}
  {Recall the notation of Example \ref{ex:rank3gaussian} above} Define 
\[R=\frac{1}{\sqrt{3}}(I+\omega(P+P^2)) . \] 
This is indeed 
a unitary YBO (see, e.g. \cite{RWlocalization}).  
It is straightforward to verify that the only invertible $3\times 3$ matrices $M$ such that $M\otimes I+I\otimes M$ commutes with $R$ are scalar multiples of the identity.  Thus $R$ is not locally equivalent to a charge conserving matrix.
\end{example}

In the other direction it is not hard to test the utility of this method in finding a charge conserving solution equivalent to a given solution.  Although somewhat artificial the following meets the goal:

\begin{example}
  Define $R=A\otimes AS(A\otimes A)^{-1}$ where  $A= \left[ \begin {array}{ccc} 1&1&1\\ \noalign{\medskip}0&1&1
\\ \noalign{\medskip}0&0&1\end {array} \right] 
$ and \[S= \left[ \begin {array}{ccccccccc} a&0&0&0&0&0&0&0&0
\\ \noalign{\medskip}0&a+b&0&a&0&0&0&0&0\\ \noalign{\medskip}0&0&a&0&0
&0&0&0&0\\ \noalign{\medskip}0&-b&0&0&0&0&0&0&0\\ \noalign{\medskip}0&0
&0&0&b&0&0&0&0\\ \noalign{\medskip}0&0&0&0&0&0&0&a&0
\\ \noalign{\medskip}0&0&0&0&0&0&a&0&0\\ \noalign{\medskip}0&0&0&0&0&-
b&0&a+b&0\\ \noalign{\medskip}0&0&0&0&0&0&0&0&a\end {array} \right].
\] 
{By construction this $R$ is obviously $\infty$-equivalent to a CC one, but is not itself CC. We will use the \enzuigiri\ method of Theorem \ref{thm:locallyCCcriterion} on $R$ to recover an equivalent CC.}

Then one computes the family of 
{matrices}
$M$ such that $[\Delta_2(M),R]=0$: these are, for arbitrary $u,v,w$ of the form $M=\left[ \begin {array}{ccc} u & v-u & w-v\\
0 & v & w-v\\
0 & 0 & w\end {array} \right],$ so choose $u,v,w$ distinct such that $u+v,u+w,v+w$ are distinct. 
Taking a basis of eigenvalues of such an $M$ provides a suitable local basis change that renders $R$ charge conserving.
\end{example}

\subsection{$\infty$-Equivalent to Charge-Conserving}
As we have mentioned, finding $\infty$-equivalences can be challenging.  One key result of this paper provides such an equivalence:
We show 
that the 
so-called $8$ vertex Yang-Baxter operator in rank $2$ is $\infty$-equivalent to a charge conserving solution. {(It is a two-parameter class of solutions, but we treat them all together, and will speak of them as if a single solution.)} 

In its original form the $8$ vertex solution is (cf. \cite{Hietarinta92} for the more classical ``star-triangle" form):  
\[
\left[ \begin {array}{cccc} {p}^{2}+2\,pq-{q}^{2}&0&0&{p}^{2}-{q}^{2}
\\ \noalign{\medskip}0&{p}^{2}-{q}^{2}&{p}^{2}+{q}^{2}&0
\\ \noalign{\medskip}0&{p}^{2}+{q}^{2}&{p}^{2}-{q}^{2}&0
\\ \noalign{\medskip}{p}^{2}-{q}^{2}&0&0&{p}^{2}-2\,pq-{q}^{2}
\end {array} \right]. 
\]
{where  $p,q$ are non-zero to ensure invertibility.}
Dividing by $2p^2$ and then substituting $x=q/p$ we obtain the 
formulation 
$S$ in the following: 

\begin{proposition}\label{prop:8va}
Let $x\in\mathbb{C}^\times$ with $x^4\neq 1$, and define 
$\alpha:=\frac{1-x^{2}}{2}$.  Consider the following Yang-Baxter operators:
\[
R :=
\begin{bmatrix}
1 & 0 & 0 & 0\\
0 & 1-x^2 & x^2 & 0\\
0 & 1 & 0 & 0\\
0 & 0 & 0 & -x^2
\end{bmatrix},
\qquad
S=
\begin{bmatrix}
\alpha+x & 0 & 0 & \alpha \\
0 & \alpha & 1-\alpha & 0 \\
0 & 1-\alpha & \alpha & 0 \\
\alpha & 0 & 0 & \alpha-x
\end{bmatrix}.
\]
Let $\rho_n^R,\rho_n^S : B_n \to \mathrm{GL}(2^n)$ be the braid group
representations determined by $R$ and $S$, respectively.  
Then $\rho_n^R$ and $\rho_n^S$ are equivalent for all $n\ge 1$.
{In particular, the eight-vertex constant YBO is $\infty$-equivalent to a type-a charge-conserving solution.} \end{proposition}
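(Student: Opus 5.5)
The plan is to realise the $\infty$-equivalence by an explicit DSL move in the sense of \S\ref{ss:DSL}. We look for $A,T\in GL_2$ with $(T\otimes T)S=S(T\otimes T)$ and $(A\otimes AT)\,S\,(A\otimes AT)^{-1}=R$. The intertwiners are then $T_n=A^{\otimes n}(I\otimes T\otimes\cdots\otimes T^{n-1})$, which gives equivalence of $\rho_n^R$ and $\rho_n^S$ for every $n$ at once.

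First I would check that such a move is at least consistent with the spectra, i.e.\ that $R$ and $S$ are $2$-equivalent. On $\operatorname{span}\{\ket{01},\ket{10}\}$, $S$ acts by the symmetric block $\begin{bmatrix}\alpha&1-\alpha\\1-\alpha&\alpha\end{bmatrix}$, with eigenvalues $1$ and $2\alpha-1=-x^2$. On $\operatorname{span}\{\ket{00},\ket{11}\}$ it has trace $2\alpha=1-x^2$ and determinant $-x^2$, so its eigenvalues are again $1$ and $-x^2$. For $R$, $\ket{00}$ has eigenvalue $1$, $\ket{11}$ has eigenvalue $-x^2$, and the middle block also has eigenvalues $1$ and $-x^2$. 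The hypothesis $x^4\neq1$ gives $-x^2\neq 1$, so both operators are diagonalisable with spectrum $\{1,1,-x^2,-x^2\}$. Both therefore satisfy the Hecke relation $(\sigma-1)(\sigma+x^2)=0$, with the same $(2,2)$ multiplicities; this is the $\mathfrak{gl}(1|1)$-type pattern rather than the $(3,1)$ pattern of the standard Jimbo solution.

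Next I would find the symmetry $T$. The eight-vertex matrix $S$ conserves parity, so it commutes with $Z\otimes Z$ where $Z=\operatorname{diag}(1,-1)$. I would compute the full linear solution space of $[T\otimes T,S]=0$. I expect it to contain $Z$, and possibly an antidiagonal or Hadamard-type element such as $X$ up to scalars. The idea is that the DS-twist by $T$ kills the $\ket{00}\leftrightarrow\ket{11}$ mixing, which no single local change can do; the paper notes that $R$ and $S$ are not locally equivalent. With $T$ fixed, the remaining condition $(A\otimes AT)S=R(A\otimes AT)$ is a polynomial system in the four entries of $A$. I would solve it by matching eigenvectors. $(A\otimes AT)$ must send the $1$-eigenspace of $S$ to that of $R$, namely $\operatorname{span}\{\ket{00},\,\ket{01}+\ket{10}\}$, and the $(-x^2)$-eigenspace to $\operatorname{span}\{\ket{11},\,x^2\ket{01}-\ket{10}\}$. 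I would try Hadamard-like $A=\begin{bmatrix}a&b\\c&d\end{bmatrix}$ with entries in $\mathbb{Q}(x,\sqrt{\cdot})$, guided by the fact that $S$ is a rotated form of a diagonal-plus-swap matrix.

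The main obstacle is this last step: a $1$-step DSL move may not exist (compare Example \ref{ex:texas2step}), and a Gr\"obner basis computation may show the system is inconsistent. If so, the first fallback is a two-step chain, a DS-move followed by a local move as permitted by the Lemma in \S\ref{ss:DSL}. The second fallback is representation-theoretic. Both $\rho_n^R$ and $\rho_n^S$ factor through the Hecke algebra $H_n(-x^2)$, and in fact through its quotient corresponding to the $(1|1)$ hook representations. When this algebra is semisimple, equivalence reduces to equality of the characters $\operatorname{tr}\rho_n^R(h)=\operatorname{tr}\rho_n^S(h)$ for all $h$. I would try to prove this by showing that both traces are given by the same weighted Markov-type formula, computed from the partial traces of $R$ and $S$ after inserting a suitable diagonal enhancement. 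The values of $x$ at which $H_n(-x^2)$ is not semisimple would then have to be handled separately, either by continuity from the explicit intertwiner or by a direct check.
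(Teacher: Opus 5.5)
\textbf{The DSL route cannot succeed, for any admissible $x$.} Suppose $T\otimes T$ commutes with $R$. Then it preserves the eigenspaces $\mathrm{span}\{\ket{00},\ket{01}+\ket{10}\}$ and $\mathrm{span}\{\ket{11},x^2\ket{01}-\ket{10}\}$. The $\ket{11}$-component of $(T\otimes T)\ket{00}$ and the $\ket{00}$-component of $(T\otimes T)\ket{11}$ must therefore vanish, which forces $T$ to be diagonal. The same argument applies to every $(D\otimes I)R(D\otimes I)^{-1}$ with $D$ diagonal. Hence the whole DS-class of $R$ consists of diagonal conjugates of $R$, all of which are charge conserving.

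By the Lemma of \S\ref{ss:DSL}, any chain of DS and local moves can be rewritten as DS-moves followed by one local move. A chain from $R$ to $S$ would therefore make $S$ locally equivalent to a CC matrix. But the only $M$ with $[S,\Delta_2(M)]=0$ are scalars, so Theorem~\ref{thm:locallyCCcriterion} rules this out. Your two-step fallback is exactly this normal form, so it fails for the same reason.

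The same obstruction is visible on the $S$ side. The invertible $T$ with $[T\otimes T,S]=0$ are only the nonzero multiples of $I$ and $Z$; there is no $X$- or Hadamard-type symmetry. Moreover, $DS_Z(S)$ merely flips the signs of all off-diagonal entries, so the $\ket{00}\leftrightarrow\ket{11}$ mixing survives. The paper makes this point right after its proof: this $\infty$-equivalence does not come from DSL moves, which is much of the point of the proposition.

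\textbf{Your Hecke-algebra fallback only reaches semisimple parameters.} Matching enhanced Markov traces would not give equal characters. An enhanced trace is a single weighted combination of irreducible characters, and the unenhanced $\mathrm{tr}_2$ of $R$ is not scalar. The character step can nevertheless be repaired:
\begin{enumerate}
\item A trace function on the Hecke algebra is determined by its values on Coxeter elements of Young subgroups (Geck--Pfeiffer), and these values factor as products of $\mathrm{tr}\,\rho_k(\sigma_1\cdots\sigma_{k-1})$.
\item Iterated partial traces give $\mathrm{tr}\,\rho_k(\sigma_1\cdots\sigma_{k-1})=\mathbf{1}^{T}N^{k-1}\mathbf{1}$, with $N=\begin{bmatrix}1&1-x^2\\0&-x^2\end{bmatrix}$ for $R$ and $N=\begin{bmatrix}\alpha+x&\alpha\\ \alpha&\alpha-x\end{bmatrix}$ for $S$.
\item Both matrices have spectrum $\{1,-x^2\}$ and the two sequences agree at $k=1,2$, hence for all $k$.
\end{enumerate}
This yields equal composition factors for every $x$, and an isomorphism whenever the algebra is semisimple.

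However, $x^4\neq1$ still allows $x^2$ to be a primitive $e$-th root of unity with $e\ge 3$. There the Hecke algebra is non-semisimple for $n\ge e$, and equal characters no longer imply equivalence. Continuity does not help: equivalence is not preserved under specialisation, since intertwiners that are invertible for generic $x$ can degenerate at special $x$. In any case your fallback has no explicit intertwiner to take a limit of. A ``direct check'' would be infinitely many cases.

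The missing ingredient is an intertwiner constructed uniformly in $x$, which is what the paper's proof builds. It defines the recursive block matrices $D_n,P_n$ from $D_{n-1},P_{n-1}$. The generators $\sigma_i$ with $i\ge2$ are handled by induction through the $2\times2$ block structure. For $\sigma_1$, a $4\times4$ block expansion in $D_{n-2},P_{n-2}$ reduces the check to scalar identities in $x$, with no semisimplicity input.
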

\medskip

    Before proceeding to the proof, we mention that the enzuigiri method, applied to $S$ shows that $S$ is not locally CC: the only $M$ such that $M\otimes I+I\otimes M$ that commute with $S$ are of the form $mI$.  Of course this follows from \cite{Hietarinta92} as well.

\begin{proof}
Set
\(
 a_n := x^{\,n-2}.
\)
We define two sequences of matrices $\{D_n\}_{n\ge 0}$ and
$\{P_n\}_{n\ge 0}$ recursively
as follows.
Let
\[
D_0 = (x), \qquad P_0 = (0),
\]
and for $n\ge 1$ define $2\times 2$-block matrices
\begin{equation}   \label{eq:DPrecursion}
D_n :=
\begin{bmatrix}
D_{n-1} &
\displaystyle \frac{x-1}{x+1}\, a_n P_{n-1}\\[6pt]
\displaystyle -\frac{x-1}{x+1}\, P_{n-1} &
a_n D_{n-1}
\end{bmatrix},
\qquad
P_n :=
\begin{bmatrix}
P_{n-1} & a_n D_{n-1}\\
D_{n-1} & -a_n P_{n-1}
\end{bmatrix}.
\end{equation}

We claim that for all $n\ge 1$ $D_n$ and $P_n$ are intertwiners for $\rho_n^R$ and $\rho_n^S$, explicitly:
\[
D_n \; \rho_n^R(\beta) \; = \;  \rho_n^S(\beta) \; D_n,
\qquad
P_n \rho_n^R(\beta) = \rho_n^S(\beta) P_n
\quad
\text{for all }\beta\in B_n.
\]
{We will prove the claim (and hence the Proposition) by induction on $n$, with base $n=2$.}

\medskip

\noindent\emph{Base case.}
For $n=1$, 
the matrices $D_1=\mathrm{diag}(x,1)$ and
$P_1=\begin{bmatrix}0&1\\ x&0\end{bmatrix}$ are invertible, 
{the braid group $B_1$ is trivial, the representations do not use $R,S$,}
and the
intertwining relations are immediate.
For $n=2$, the claim reduces to checking that $D_2$ and $P_2$ satisfy
\[
X R = S X.
\]
Here 
\[
D_{2}=
\begin{bmatrix}
x&0&0&\dfrac{x-1}{x+1}\\[6pt]
0&1&\dfrac{x(x-1)}{x+1}&0\\[6pt]
0&-\dfrac{x-1}{x+1}&x&0\\[6pt]
-\dfrac{x(x-1)}{x+1}&0&0&1
\end{bmatrix},
\qquad
P_{2}=
\begin{bmatrix}
0&1&x&0\\
x&0&0&1\\
x&0&0&-1\\
0&1&-x&0
\end{bmatrix}.
\]

 Now \[
D_{2}R=SD_{2}=
\begin{bmatrix}
x&0&0&\dfrac{x^{2}(1-x)}{x+1}\\[6pt]
0&\dfrac{1-x^{3}}{x+1}&x^{2}&0\\[6pt]
0&x^{2}-x+1&\dfrac{x^{2}(1-x)}{x+1}&0\\[6pt]
\dfrac{x(1-x)}{x+1}&0&0&-x^{2}
\end{bmatrix}, 
\] and \[
P_{2}R=SP_{2}=
\begin{bmatrix}
0&-x^{2}+x+1&x^{2}&0\\
x&0&0&-x^{2}\\
x&0&0&x^{2}\\
0&-x^{2}-x+1&x^{2}&0
\end{bmatrix}.
\]
\medskip

Assume the 
{claim}
holds for $n-1\ge 1$.
{For the inductive step it is enough to show that both $D_n$ and $P_n$ intertwine the images of each of a generating set of braids.}

For $i>1$, the {Artin} braid generator $\sigma_i$ acts as
\[
\rho_n^R(\sigma_i)=I_2\otimes \rho_{n-1}^R(\sigma_{i-1}),
\qquad
\rho_n^S(\sigma_i)=I_2\otimes \rho_{n-1}^S(\sigma_{i-1}).
\]
Since $D_n$ and $P_n$ are defined using a $2\times 2$ block structure
with entries
{that are scalar multiples of}
$D_{n-1}$ and $P_{n-1}$, the induction hypothesis
implies that both $D_n$ and $P_n$ intertwine $\rho_n^R(\sigma_i)$ and
$\rho_n^S(\sigma_i)$ for all $i>1$.  

Explicitly:
Fix $\beta\in B_{n-1}$.  Suppose that $A,B,C,E$ are scalar multiples of $D_{n-1}$ or $P_{n-1}$ so that they intertwine
$\rho_{n-1}^R$ and $\rho_{n-1}^S$ at $\beta$, i.e.
\(
A\,\rho_{n-1}^R(\beta)=\rho_{n-1}^S(\beta)\,A,\)
etc.

Let
\[
M=\begin{bmatrix}A&B\\ C&E\end{bmatrix},
\qquad
R_\beta:=\rho_{n-1}^R(\beta),\quad S_\beta:=\rho_{n-1}^S(\beta).
\]
Now
\[
(I_2\otimes \rho_{n-1}^R(\beta))=
\begin{bmatrix}R_\beta&0\\ 0&R_\beta\end{bmatrix},
\qquad
(I_2\otimes \rho_{n-1}^S(\beta))=
\begin{bmatrix}S_\beta&0\\ 0&S_\beta\end{bmatrix}.
\]
Therefore,
\[
M\,(I_2\otimes \rho_{n-1}^R(\beta))
=
\begin{bmatrix}A&B\\ C&E\end{bmatrix}
\begin{bmatrix}R_\beta&0\\ 0&R_\beta\end{bmatrix}
=
\begin{bmatrix}A R_\beta& B R_\beta\\ C R_\beta& E R_\beta\end{bmatrix},
\]
while
\[
(I_2\otimes \rho_{n-1}^S(\beta))\,M
=
\begin{bmatrix}S_\beta&0\\ 0&S_\beta\end{bmatrix}
\begin{bmatrix}A&B\\ C&E\end{bmatrix}
=
\begin{bmatrix}S_\beta A& S_\beta B\\ S_\beta C& S_\beta E\end{bmatrix}.
\]
Hence 
\[
M\,(I_2\otimes \rho_{n-1}^R(\beta))=(I_2\otimes \rho_{n-1}^S(\beta))\,M
\]
is equivalent, block-by-block, to the four intertwining identities
\[
A R_\beta=S_\beta A,\qquad B R_\beta=S_\beta B,\qquad
C R_\beta=S_\beta C,\qquad E R_\beta=S_\beta E,
\]
which hold by assumption.  In particular, any $2\times 2$ block matrix $M$ whose blocks
each intertwine $\rho_{n-1}^R(\beta)$ and $\rho_{n-1}^S(\beta)$ automatically intertwines
$I_2\otimes \rho_{n-1}^R(\beta)$ and $I_2\otimes \rho_{n-1}^S(\beta)$.

It therefore suffices to verify the intertwining relations for 
{the remaining generator,}
$\sigma_1$.

\medskip

Write
\[
R_{\sigma_1} := \rho_n^R(\sigma_1) = R \otimes I_{2^{n-2}},
\qquad
S_{\sigma_1} := \rho_n^S(\sigma_1) = S \otimes I_{2^{n-2}},
\]

and view these as $4\times 4$ block matrices whose entries are scalar
multiples of $I_{2^{n-2}}$.
Using the recursive definitions, the matrices $D_n$ and $P_n$ may also be
written as $4\times 4$ block matrices whose entries are scalar multiples
of $D_{n-2}$ or $P_{n-2}$.

\medskip 
Fix $n\ge 2$ and set
\[
m:=2^{n-2},\qquad I:=I_m,\qquad D:=D_{n-2},\qquad P:=P_{n-2}.
\]
Write also 
\[
\alpha:=\frac{1-x^2}{2},\qquad 
a_k:=x^{k-2} ,\qquad
h_k:=\frac{x-1}{x+1}\,a_k,\qquad c:=-\frac{x-1}{x+1}.
\]

The braid generator
$\sigma_1$ acts by $R$ (resp.\ $S$) on the first two tensor factors 
{(in the language of tensor space representations, where $\rho_n$ acts on $V^{\otimes n}$ with $V=\C^2$ here)}
and by $I$ on the rest, hence
\[
R_{\sigma_1}:=\rho_n^R(\sigma_1)=R\otimes I
=
\begin{bmatrix}
1\cdot I&0&0&0\\
0&(1-x^2)I&x^2 I&0\\
0&I&0&0\\
0&0&0&(-x^2)I
\end{bmatrix},
\]
and
\[
S_{\sigma_1}:=\rho_n^S(\sigma_1)=S\otimes I
=
\begin{bmatrix}
(\alpha+x)I&0&0&\alpha I\\
0&\alpha I&(1-\alpha)I&0\\
0&(1-\alpha)I&\alpha I&0\\
\alpha I&0&0&(\alpha-x)I
\end{bmatrix}.
\]

By the recursion 
(\ref{eq:DPrecursion}) 
(with $h_n=\frac{x-1}{x+1}a_n$ and $c=-\frac{x-1}{x+1}$),
\[
D_n=\begin{bmatrix} D_{n-1} & h_n P_{n-1}\\ c\,P_{n-1} & a_n D_{n-1}\end{bmatrix},
\qquad
P_n=\begin{bmatrix} P_{n-1} & a_n D_{n-1}\\ D_{n-1} & -a_n P_{n-1}\end{bmatrix}.
\]
Expanding $D_{n-1}$ and $P_{n-1}$ once more in terms of $D=D_{n-2}$ and $P=P_{n-2}$ gives:

\[
D_{n-1}=
\begin{bmatrix}
D & h_{n-1}P\\
cP & a_{n-1}D
\end{bmatrix},
\qquad
P_{n-1}=
\begin{bmatrix}
P & a_{n-1}D\\
D & -a_{n-1}P
\end{bmatrix}.
\]

Therefore $D_n$ is a $4\times 4$ block matrix with $m\times m$ blocks (rows/cols indexed by
$\{00,01,10,11\}$ on the first two tensor factors):
\begin{equation}\label{star_D}
D_n=
\begin{bmatrix}
D & h_{n-1}P & h_n P & h_n a_{n-1}D\\
cP & a_{n-1}D & h_n D & -h_n a_{n-1}P\\
cP & c a_{n-1}D & a_n D & a_n h_{n-1}P\\
cD & -c a_{n-1}P & a_n cP & a_n a_{n-1}D
\end{bmatrix}.
\end{equation}

Similarly
\begin{equation}\label{star_P}
P_n=
\begin{bmatrix}
P & a_{n-1}D & a_n D & a_n h_{n-1}P\\
D & -a_{n-1}P & a_n cP & a_n a_{n-1}D\\
D & h_{n-1}P & -a_n P & -a_n a_{n-1}D\\
cP & a_{n-1}D & -a_n D & a_n a_{n-1}P
\end{bmatrix}.
\end{equation}
Define $\Delta=D_nR_{\sigma_1}-S_{\sigma_1}D_n.$  Then the 16 entries are:

\begin{enumerate}
\item $\displaystyle
\Delta_{11}
=
\bigl(1-(\alpha+x)-\alpha c\bigr)D$
\item $\displaystyle
\Delta_{12}
=
\bigl((1-x^2)h_{n-1}+h_n-(\alpha+x)h_{n-1}+\alpha c a_{n-1}\bigr)P$
\item $\displaystyle
\Delta_{13}
=
\bigl(x^2 h_{n-1}-(\alpha+x)h_n-\alpha c a_n\bigr)P$
\item $\displaystyle
\Delta_{14}
=
a_{n-1}\bigl(-x^2 h_n-(\alpha+x)h_n-\alpha a_n\bigr)D$

\item $\displaystyle
\Delta_{21}
=
\bigl(c-\alpha c-(1-\alpha)c\bigr)P$
\item $\displaystyle
\Delta_{22}
=
\bigl((1-x^2)a_{n-1}+h_n-\alpha a_{n-1}-(1-\alpha)c a_{n-1}\bigr)D$
\item $\displaystyle
\Delta_{23}
=
\bigl(x^2 a_{n-1}-\alpha h_n-(1-\alpha)a_n\bigr)D$
\item $\displaystyle
\Delta_{24}
=
a_{n-1}\bigl(x^2 h_n+\alpha h_n-(1-\alpha)a_n h_{n-1}\bigr)P$

\item $\displaystyle
\Delta_{31}
=
\bigl(c-(1-\alpha)c-\alpha c\bigr)P$
\item $\displaystyle
\Delta_{32}
=
\bigl((1-x^2)c a_{n-1}+a_n-(1-\alpha)a_{n-1}-\alpha c a_{n-1}\bigr)D$
\item $\displaystyle
\Delta_{33}
=
\bigl(x^2 c a_{n-1}-(1-\alpha)h_n-\alpha a_n\bigr)D$
\item $\displaystyle
\Delta_{34}
=
a_{n-1}\bigl(-x^2 a_n h_{n-1}+(1-\alpha)h_n-\alpha h_{n-1}\bigr)P$

\item $\displaystyle
\Delta_{41}
=
\bigl(c-\alpha-(\alpha-x)c\bigr)D$
\item $\displaystyle
\Delta_{42}
=
\bigl(-(1-x^2)c a_{n-1}+a_n c-\alpha h_{n-1}+(\alpha-x)c a_{n-1}\bigr)P$
\item $\displaystyle
\Delta_{43}
=
\bigl(-x^2 c a_{n-1}-\alpha h_n-(\alpha-x)a_n c\bigr)P$
\item $\displaystyle
\Delta_{44}
=
a_{n-1}\bigl(-x^2 a_n-\alpha h_n-(\alpha-x)a_n\bigr)D$
\end{enumerate}

Substituting $\alpha=\tfrac12(1-x^2)$, $a_n=x a_{n-1}$,
$h_n=\tfrac{x-1}{x+1}a_n$, and $c=-\tfrac{x-1}{x+1}$,
each coefficient vanishes identically. Hence $D_nR_{\sigma_1}=S_{\sigma_1}D_n$.  A similar calculation shows that $P_nR_{\sigma_1}=S_{\sigma_1}P_n.$

This completes the induction and proves the claim, observing  that $R$ is precisely a type-a charge conserving solution (in the terminology of \cite{MartinRowell}).
\end{proof}

{Given the far-from-trivial nature of this proof,}
We would feel sheepish indeed if the $\infty$-equivalence could be established by means of a  
(few-move)
 DSL equivalence 
{between the constant Yang--Baxter operators $R$ and $S$ above.}
This is not the case, at least for generic values of $x$.  Indeed the only matrices of the form $T\otimes T$ that commute with $R$ are diagonal, and the same is true of $DS_T(R)$: any $M$ with $M\otimes M$ centralising $DS_T(R)$ is diagonal.  Thus the $DS$-equivalence class of $R$ consists of only of charge conserving YBOs. It is then a straightforward calculation to verify that there is no matrix $A$ such that $A\otimes A$ conjugates $DS_T(R)$ to $S$ with $T$ diagonal.  In particular we see that there are $\infty$-equivalences that are do not come from DSL-equivalences.

\subsection{A Symmetry of Charge-Conserving Solutions: X-equivalence.}
One of the hallmarks of charge-conserving YBOs is that, as a set, they are invariant under conjugation by a diagonal matrix, as noted in \cite{MartinRowell}.  That is, if $R$ is a charge-conserving YBO in rank $N$, and $X$ is an arbitrary $N^2\times N^2$ matrix then $XRX^{-1}$ is also a charge-conserving YBO. In \emph{loc. cit.} this is called \textbf{$X$-equivalence} and is employed to reduce the classification problem. 
Does this mean that $R$ and $XRX^{-1}$ are $\infty$-equivalent?  
In \cite[Conjecture 7.22]{MRT25} it is conjectured that this is so, and a proof for $N=2$ is supplied.  We  prove this in full generality:
\begin{theorem}\label{thm:Ximpliesinfty}  $X$-equivalence implies $\infty$-equivalence (for CC YBOs).
\end{theorem}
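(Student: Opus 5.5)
The plan is to write down an explicit $\infty$-equivalence consisting of \emph{diagonal} intertwiners, generalising the fact that for $n=2$ the matrix $X$ itself intertwines. Throughout, $X$-equivalence is understood in the sense of \cite{MartinRowell}: $X$ is an invertible \emph{diagonal} $N^2\times N^2$ matrix, $X\ket{ij}=x_{ij}\ket{ij}$, and $S=XRX^{-1}$.

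First I would record what conjugation by $X$ does to a CC YBO $R$. Charge conservation means the two-site charge sectors are $\mathrm{span}\{\ket{ii}\}$ and $\mathrm{span}\{\ket{ij},\ket{ji}\}$ for $i\neq j$. Since $X$ is diagonal, $S$ has the same diagonal entries as $R$. The only nonzero off-diagonal entries of $R$ are $\bra{ji}R\ket{ij}$ with $i\ne j$, and these are rescaled as
\[
\bra{ji}S\ket{ij}=\frac{x_{ji}}{x_{ij}}\bra{ji}R\ket{ij}.
\]

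Next I would define, for each $n$, a diagonal matrix $T_n$ on $V^{\otimes n}$ by
\[
T_n\ket{i_1\cdots i_n}=f(i_1,\ldots,i_n)\ket{i_1\cdots i_n},
\qquad
f(i_1,\ldots,i_n)=\prod_{1\le a<b\le n} g(i_a,i_b).
\]
Here $g(i,j)=x_{ij}$ for $i\neq j$ and $g(i,i)=1$. All these entries are nonzero, so $T_n$ is invertible, and $T_2$ agrees with $X$ up to the harmless rescaling of the $\ket{ii}$ entries. It suffices to check $T_n\rho^R_n(\sigma_k)T_n^{-1}=\rho^S_n(\sigma_k)$ for each Artin generator $\sigma_k$.

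For a diagonal $T_n$, conjugation multiplies the matrix entry from basis word $w$ to basis word $w'$ by $f(w')/f(w)$. The operator $\rho_n^R(\sigma_k)=I^{\otimes(k-1)}\otimes R\otimes I^{\otimes(n-k-1)}$ has nonzero entries only between $w$ and $w'$ that agree outside positions $k,k+1$. By charge conservation, either $w'=w$, or $w'$ is $w$ with the letters $i=i_k\neq j=i_{k+1}$ swapped.

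The key combinatorial step is that such an adjacent swap changes $f$ only through the factor attached to the pair of positions $(k,k+1)$.
\begin{itemize}
\item For a position $c<k$, the two factors $g(i_c,i)\,g(i_c,j)$ are merely permuted.
\item The same holds for a position $c>k+1$.
\item Pairs of positions not involving $k$ or $k+1$ are untouched.
\end{itemize}
Hence $f(w')/f(w)=g(j,i)/g(i,j)=x_{ji}/x_{ij}$, independent of the remaining letters. This is exactly the rescaling of the off-diagonal entries of $R$ computed above, and on diagonal entries the ratio is $1$. Therefore $T_n\rho_n^R(\sigma_k)T_n^{-1}=\rho_n^S(\sigma_k)$ for every $k$, and $\{T_n\}$ is an $\infty$-equivalence.

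I expect the main obstacle to be only conceptual: seeing that the necessary ratio condition $f(\ldots ji\ldots)/f(\ldots ij\ldots)=x_{ji}/x_{ij}$, which must hold uniformly in the surrounding letters, is solvable at all. The pairwise-product ansatz (an ``inversion-type'' cocycle) resolves this. The one place care is needed is where the CC hypothesis enters: it guarantees that $R\otimes I$ never connects words differing by anything other than an adjacent transposition, which is what makes the ratio well defined.
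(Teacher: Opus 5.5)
Your proposal is correct and is essentially the paper's proof. The paper uses the same diagonal intertwiners $X_n\ket{i_1\cdots i_n}=\bigl(\prod_{p<q}x_{i_p,i_q}\bigr)\ket{i_1\cdots i_n}$ and checks each generator, using that an adjacent swap rescales the weight by $x_{i_{k+1},i_k}/x_{i_k,i_{k+1}}$. Your differences are harmless: you set $g(i,i)=1$ instead of keeping $x_{ii}$ (the paper's choice gives $X_2=X$ exactly), and you spell out the pairwise-factor cancellation that the paper leaves implicit.
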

\begin{proof}
    Let $R$ be a CC YBO and $X$ an $N^2\times N^2$ diagonal matrix, defined by $X\ket{ab}=x_{a,b}\ket{ab}$. Set $S=XRX^{-1}$. For $a\neq b$, we have $R\ket{ab}=\alpha_{ab}\ket{ab}+\beta_{ab}\ket{ba}$ so \[S\ket{ab}=\alpha_{ab}\ket{ab}+\beta_{ab}\frac{x_{ba}}{x_{ab}}\ket{ba},\] while on $\ket{aa}$ $R$ and $S$ coincide. We must show that $\rho_S$ and $\rho_R$ are equivalent $B_n$ representations for all $n$.  Define a diagonal matrix $X_n$ by \[X_n\ket{i_1\cdots i_n}=\left(\prod_{p<q}x_{i_p,i_q}\right)\ket{i_1\cdots i_n},\] noting that $X_2=X$. We claim that $X_n$ intertwines $\rho_R$ and $\rho_S$.  We must check that \[I^{\otimes k-1}\otimes S\otimes I^{\otimes n-k-1}=X_n(I^{\otimes k-1}\otimes R\otimes I^{\otimes n-k-1})X_n^{-1}.\]  We compute:
    \begin{align*}      
 && X_n(I^{\otimes k-1}\otimes R\otimes I^{\otimes n-k-1})X_n^{-1}\ket{i_1\cdots i_n}=X_n(I^{\otimes k-1}\otimes R\otimes I^{\otimes n-k-1})\left(\prod_{p<q}x_{i_p,i_q}\right)^{-1}\ket{i_1\cdots i_n}\\ 
&& =X_n\left(\prod_{p<q}x_{i_p,i_q}\right)^{-1}\ket{i_1\cdots i_{k-1}}\otimes (\alpha_{i_k,i_{k+1}}\ket{i_ki_{k+1}}+\beta_{i_k,i_{k+1}})\ket{i_{k+1}i_k}\otimes \ket{i_{k+2}\cdots i_n}=\\
 &&\alpha_{i_k,i_{k+1}}\ket{i_1\cdots i_n}+\beta_{i_k,i_{k+1}}\left(\prod_{p<q}x_{i_p,i_q}\right)^{-1}X_n\ket{i_1\cdots i_{k-1}i_{k+1}i_ki_{k+2}\cdots i_n}=\\
&& \alpha_{i_k,i_{k+1}}\ket{i_1\cdots i_n}+\beta_{i_k,i_{k+1}}\frac{x_{i_{k+1},i_k}}{x_{i_k,i_{k+1}}}\ket{i_1\cdots i_{k-1}i_{k+1}i_ki_{k+2}\cdots i_n}=(I^{\otimes k-1}\otimes S\otimes I^{\otimes n-k-1})\ket{i_1\cdots i_n}.
 \end{align*}
 This proves the theorem.
\end{proof}

We conclude this section with an example in which two charge-conserving YBOs are $X$-equivalent, and hence $\infty$-equivalent, without being DSL-equivalent.  This illustrates that $X$-equivalence goes beyond DSL-equivalence.
\begin{example}
Let \[S=
\begin{bmatrix}
1 & 0 & 0 & 0 & 0 & 0 & 0 & 0 & 0 \\
0 & 0 & 0 & 1 & 0 & 0 & 0 & 0 & 0 \\
0 & 0 & 0 & 0 & 0 & 0 & 1 & 0 & 0 \\
0 & 1 & 0 & 0 & 0 & 0 & 0 & 0 & 0 \\
0 & 0 & 0 & 0 & 1 & 0 & 0 & 0 & 0 \\
0 & 0 & 0 & 0 & 0 & 0 & 0 & 1 & 0 \\
0 & 0 & 1 & 0 & 0 & 0 & 0 & 0 & 0 \\
0 & 0 & 0 & 0 & 0 & 1 & 0 & 0 & 0 \\
0 & 0 & 0 & 0 & 0 & 0 & 0 & 0 & 1
\end{bmatrix}
\quad \text{and} \quad
R=
\begin{bmatrix}
1 & 0 & 0 & 0 & 0 & 0 & 0 & 0 & 0 \\
0 & 0 & 0 & -1 & 0 & 0 & 0 & 0 & 0 \\
0 & 0 & 0 & 0 & 0 & 0 & -1 & 0 & 0 \\
0 & -1 & 0 & 0 & 0 & 0 & 0 & 0 & 0 \\
0 & 0 & 0 & 0 & 1 & 0 & 0 & 0 & 0 \\
0 & 0 & 0 & 0 & 0 & 0 & 0 & -1 & 0 \\
0 & 0 & -1 & 0 & 0 & 0 & 0 & 0 & 0 \\
0 & 0 & 0 & 0 & 0 & -1 & 0 & 0 & 0 \\
0 & 0 & 0 & 0 & 0 & 0 & 0 & 0 & 1
\end{bmatrix}.
\]  
 Firstly, $R$ and $S$ are DSL-inequivalent:
{A computer computation reveals that}
the invertible matrices $T$ such that $T\otimes T$ commutes with $R$ are monomial. Moreover, $DS_T(R)$, for any such monomial $T$ has the same property: $[DS_T(R),M\otimes M]=0$ with $M$ invertible implies $M$ is monomial.  Thus,  it suffices to check if $DS_T(R)$ and $S$ are locally equivalent for some choice of $T$. A computer calculation shows they are not.

On the other hand, the matrix $X:=diag(1, -1, -1, 1, 1, -1, 1, 1, 1)$ provides an $X$-equivalence between $R$ and $S$. 
\end{example}

\section{A Whisker Beyond Charge-Conserving}

Despite the scientific community's reluctance to embrace
{the framework of}
charge-conserving braid representations \cite{MartinRowell} {(taking more than 4 years from posting to acceptance)}, they are 
{strikingly} 
ubiquitous when viewed through the lens of $\infty$-equivalence.  
In Hietarinta's \cite{Hietarinta92} classification of 
$4\times 4$ YBOs up to local equivalence, scalar multiplication and transpose \cite{Hietarinta92}, there are 11 classes of (varieties of) solutions.  \footnote{Although transpose does not generally correspond to an $\infty$-equivalence, see \cite{Korepinetal}, where they show that, for all but one of Hietarinta's classes, the transposed solutions are locally equivalent to one of the other representatives possibly after parameter changes.}
Several are already charge-conserving
{in Hietarinta's chosen representative}, 
while another handful are just a whisker away from a charge-conserving YBO, 
being of the form 
called \emph{charge-conserving-with-glue} (\CCWG) in \cite{AlmateariMartinRowell}.  {Conceptually, one chooses an ordering on the charges spaces $V^{\mset{i_1,\ldots,i_k}}$, and then a \CCWG\ matrix may have couplings (``glue") between two  charge spaces if the second is lower in the order than the first.  So while a CC matrix preserves a grading by charge spaces, a \CCwg\ matrix preserves a filtration.} \ignore{Briefly, these are matrices that are obtained by adding an \ppm{almost} upper triangular nilpotent part $M$ to a charge-conserving matrix $R$ in a way that preserves the eigenvalues and their multiplicities.}  {Whenever $R+M$ is a \CCWG\ solution, $R$ itself is a CC solution, and $R+M$ has the same characteristic polynomial as $R$.}  Of the 11, only 3 
{of Hietarinta's chosen representatives}
are not of this type.  
One of these has been shown to be an ill-disguised charge-conserving solution in \cite[Paragraph (3.1)]{AlmateariMartinRowell}, i.e., it is $\infty$-equivalent, via a $DS$-move, to a charge-conserving (monomial) solution. 
{The remaining two are 8-vertex (shown 
$\infty$-equivalent to 
CC in our main result, \ref{prop:8va} above) 
and so finally, Ising (given by $E$ in \ref{pr:Ising} below). } 

\begin{proposition}  \label{pr:Ising}
(I)    {The} solution 
\[
E:=\frac{1}{\sqrt{2}}\left[ \begin {array}{cccc} 1&0&0&1\\ \noalign{\medskip}0&1&1&0
\\ \noalign{\medskip}0&-1&1&0\\ \noalign{\medskip}-1&0&0&1\end {array}\right]
\] 
is not $\infty$-equivalent to any scalar multiple or transpose of 
any solution from 
Hietarinta's other 10 classes
of varieties of solutions. 

(II) The solution $E$ 
is not $\infty$-equivalent to a \CCWG\ solution. 
\end{proposition}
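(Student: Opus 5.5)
The approach is to separate $E$ from all the other candidates using invariants of $\infty$-equivalence that can be computed from finitely many little-ranks, namely the Jordan form at $n=2$ and the characters $\beta\mapsto \operatorname{tr}\rho_n(\beta)$ for small $n$. If $T_n\rho_n^R(\beta)=\rho_n^S(\beta)T_n$ for all $\beta$, then $\operatorname{tr}\rho_n^R(\beta)=\operatorname{tr}\rho_n^S(\beta)$ for every $\beta\in B_n$. Scalar multiples are handled by tracking the homogeneity: $\operatorname{tr}\rho_n^{cR}(\beta)=c^{\ell(\beta)}\operatorname{tr}\rho_n^R(\beta)$, where $\ell$ is the exponent sum. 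First I record the data for $E$. Write $E=\frac{1}{\sqrt2}(I+J)$ with $J^2=-I$. Then $E$ is diagonalisable with eigenvalues $e^{\pm \pi i/4}$, each of multiplicity $2$, so $E^8=I$. I also compute $\operatorname{tr}\rho_3^E$ on $\sigma_1$, $\sigma_1\sigma_2$, $\sigma_1\sigma_2^{-1}$ and $(\sigma_1\sigma_2)^3$, and if necessary the analogous values at $n=4$. These traces come out as small numbers in $\mathbb{Q}(\zeta_8)$, reflecting the finite Ising image.

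For (I), I go through Hietarinta's other 10 classes. For each class (a variety of solutions $R(p,q,\dots)$) I allow an overall scalar $c$ and also the transpose, and impose the $2$-equivalence conditions with $E$. These are: $cR$ must be diagonalisable, and its spectrum must be $\{e^{\pi i/4},e^{\pi i/4},e^{-\pi i/4},e^{-\pi i/4}\}$. Most classes fail outright, since they are non-diagonalisable generically or have the wrong multiplicity pattern. The rest are cut down to a finite set of parameter values or small subvarieties. For transposes, I use that $\rho_n^{R^t}(\beta)=\rho_n^R(\beta^{\mathrm{rev}})^t$, where $\beta^{\mathrm{rev}}$ is the reversed word. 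Hence the traces of $R^t$ are the traces of $R$ on reversed words, and the same computations cover them. On each surviving locus I compare the $n=3$ (and if needed $n=4$) character values with those of $E$ and exhibit a braid where they differ. Example \ref{ex:3notsufinfty} warns that $n=3$ alone may not always suffice.

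For (II), the key observation is that a \CCWG\ solution $R+M$ preserves a filtration by charge spaces, and its associated graded is the CC solution $R$. This persists at every little-rank $n$: $\rho_n^{R+M}(\sigma_i)$ is block upper triangular with respect to the induced filtration of $V^{\otimes n}$, and its diagonal blocks are those of $\rho_n^R(\sigma_i)$. Products of such matrices stay block triangular, so $\operatorname{tr}\rho_n^{R+M}(\beta)=\operatorname{tr}\rho_n^R(\beta)$ for all $\beta$. Hence it suffices to show that no rank-$2$ CC solution $R$ with the same spectrum as a scalar multiple of $E$ has the same characters. The rank-$2$ CC solutions are completely classified in \cite{MartinRowell}. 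Those with two eigenvalues of ratio $i$, each of multiplicity $2$, form a short explicit list up to $X$-equivalence, and $X$-equivalence preserves characters by Theorem \ref{thm:Ximpliesinfty}. I then compare characters on the same few test braids.

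The main obstacle is the uniformity over parameters. Some CC families may contain members with exactly the right spectrum. For these, one must find a single braid whose trace distinguishes them from $E$ for all admissible parameter values, rather than checking one value at a time. I would try $\operatorname{tr}\rho_3(\sigma_1\sigma_2^{-1})$ and $\operatorname{tr}\rho_3((\sigma_1\sigma_2)^3)$ first, because the full twist is central and acts by scalars on irreducible summands, so its trace encodes the decomposition into $B_3$-irreducibles. If this fails, I would fall back on the finiteness of the image: $\rho_n^E(B_n)$ is finite, so any candidate whose $B_3$-image contains an element of infinite order (detectable by a trace that is not a sum of roots of unity) is excluded.
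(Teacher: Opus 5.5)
Your plan is essentially the paper's proof. The spectral ($2$-equivalence) filter leaves only the type-a charge-conserving form $\begin{bmatrix}a&0&0&0\\0&a+b&-b&0\\0&a&0&0\\0&0&0&b\end{bmatrix}$ with $a/b=\pm i$; for (I) the 8-vertex family at $x^4=-1$ also passes, but Proposition \ref{prop:8va} makes it $\infty$-equivalent to this form. The exponent-sum-zero braid $\sigma_1\sigma_2^{-1}$ you propose first then separates it from $E$ for every scalar, since there $\operatorname{tr}\rho_3(\sigma_1\sigma_2^{-1})=6+2(a/b+b/a)=6$, against $4$ for $E$. Your handling of (II) via traces of the associated graded of the charge filtration is a slightly cleaner substitute for the paper's terse appeal to the unitarity of $E$, so what remains is only to carry out these computations, not any new idea.
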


    Combining with Proposition \ref{prop:8va}, we have the following  as a corollary:
    
    \begin{theorem}\label{thm:uniqueinftyclassinrank2}
        Up to transpose symmetry, the solution $E$ above represents the only (projective) $\infty$-class of rank $2$ solutions that is not  $\infty$-equivalent to a \CCWG\  solution.    
    \end{theorem}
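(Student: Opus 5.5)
The plan is to derive the statement as a corollary of Hietarinta's classification together with Propositions \ref{prop:8va} and \ref{pr:Ising}. Every invertible rank-$2$ YBO is, up to local equivalence, scalar multiplication and transpose, a member of one of Hietarinta's 11 classes \cite{Hietarinta92}. Local equivalence is an $\infty$-equivalence, via $A^{\otimes n}$. Scalar multiplication commutes with $\infty$-equivalence, because $\lambda R$ gives $\rho_n^{\lambda R}(\sigma_i)=\lambda\rho_n^R(\sigma_i)$ and the same $T_n$ intertwine. Hence the projective $\infty$-classes of rank-$2$ solutions, taken up to transpose, are unions of classes represented by Hietarinta's 11 representatives. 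It therefore suffices to go through the representatives one at a time.

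I will sort the 11 representatives into groups as follows.
\begin{enumerate}
\item The representatives that are already charge-conserving, or already \CCWG\ in Hietarinta's chosen form. These need nothing further, since CC is a special case of \CCWG.
\item The monomial-type representative from \cite[Paragraph (3.1)]{AlmateariMartinRowell}. It is carried by a single DS-move to a charge-conserving monomial solution. A DS-move is an $\infty$-equivalence via $T_n=I\otimes T\otimes\cdots\otimes T^{\otimes(n-1)}$, so this class is also covered.
\item The 8-vertex family. Proposition \ref{prop:8va} shows it is $\infty$-equivalent to the type-a CC solution $R$ for $x^4\neq 1$.
\item The Ising solution $E$. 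Proposition \ref{pr:Ising}(II) shows it is not $\infty$-equivalent to any \CCWG\ solution, and part (I) shows it is a genuinely separate $\infty$-class, distinct from the other 10 families even after scalars and transpose.
\end{enumerate}
This gives both existence and uniqueness of the exceptional class.

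The main obstacle is the bookkeeping at the boundaries of the parametrised families, where the general arguments may not apply as stated.
\begin{itemize}
\item For the 8-vertex family, the excluded values $x^4=1$ (i.e.\ $q/p\in\{\pm1,\pm i\}$) must be treated separately. At these values I expect the matrix either to be non-invertible or to degenerate, via Hietarinta's list, into a member of one of the other, already-handled families. I would check this directly, for instance by computing the spectrum and using the \enzuigiri\ method of Theorem \ref{thm:locallyCCcriterion} to exhibit a local equivalence to a CC matrix.
\item I must also make sure the ``transpose'' clause is handled consistently. Transpose need not preserve $\infty$-class. However, the transpose of a CC (respectively \CCWG) solution is again CC (respectively a \CCWG-type solution with respect to the reversed order), and transposing intertwiners shows $\rho^{R^t}\simeq$ the dual of $\rho^{R}$ up to the same pattern. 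So the statement ``up to transpose symmetry'' absorbs the transposed representatives, using the observation of \cite{Korepinetal} where needed.
\end{itemize}

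Finally, I would remark that $E$ is unique only up to these symmetries. Its own transpose, and its scalar multiples, form the same projective class up to transpose, so exactly one exceptional class remains. This proves the theorem.
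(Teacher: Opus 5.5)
Your proposal is correct and is essentially the paper's own argument: the theorem is read off as a corollary by running through Hietarinta's eleven representatives, namely
\begin{enumerate}
\item those already CC or \CCWG,
\item the monomial one moved to CC by a DS-move in \cite[Paragraph (3.1)]{AlmateariMartinRowell},
\item the 8-vertex family via Proposition~\ref{prop:8va},
\item and $E$ via Proposition~\ref{pr:Ising},
\end{enumerate}
using that local equivalences, DS-moves and rescaling preserve projective $\infty$-classes. Your extra care at $x^4=1$ goes beyond the paper, which does not discuss these values; at $x=\pm1$ the matrix $S$ is already CC, but at $x=\pm i$ one has $S=I\pm i\,uu^{T}$ with $u=(1,0,0,\mp i)^{T}$ flip-symmetric and $u^{T}u=0$, so no $A\otimes A$ makes $S$ charge-conserving (a CC rank-one nilpotent would have to sit in the middle block with image and co-image along $\ket{01}+\ket{10}$, forcing $u^{T}u\neq0$), and the \enzuigiri\ check you propose cannot settle that case.
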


\begin{proof}(of Proposition \ref{pr:Ising})
  We note that $E$ (and its transpose) has eigenvalues $\zeta_8,\overline{\zeta_8}$ each of multiplicity $2$, where $\zeta_8=e^{2\pi i/8}$.  
  Moreover, $E$ is diagonalisable, indeed unitary so \CCWG\ is reduced to CC.  
  This is enough to eliminate all but one form: \[\left[ \begin {array}{cccc} a&0&0&0\\ \noalign{\medskip}0&a+b&-b&0
\\ \noalign{\medskip}0&a&0&0\\ \noalign{\medskip}0&0&0&b\end {array}
 \right]
\] 
(in either classification \cite{Hietarinta92,MartinRowell})
which has eigenvalues $a,b$ each with multiplicity $2$.  Thus one must have $a=1/b=\zeta_8$, by symmetry.  But this has been shown to be not $\infty$-equivalent
to $E$ in \cite[Paragraph (7.2)]{MRT25}, 
by comparing $Trace(\rho_3(\sigma_1 \sigma_2^{-1}))$ for the two representations.  
Since multiplying by a scalar $c$ does not change this trace, we are done.
\end{proof}

Lacking a rank $3$ classification, we cannot 
yet 
characterize the $\infty$-equivalences classes that do not contain a \CCWG\ representative in that case. However, we can produce examples of rank $3$ solutions not in the $\infty$-class of \CCWG\ solutions.  The method of proof is to use the spectra of \CCWG\ solutions: any \CCWG\ solution has the same spectrum as the underlying CC shadow \cite{AlmateariMartinRowell}.  On the other hand, the spectra of all rank $3$ CC solutions are calculated in \cite[Section A5]{MartinRowell}.  

    A first example is from \cite[Section 4.1]{HMR}, which has two eigenvalues, one of multiplicity 8 and one of multiplicity 1.  We see that this is not possible for any CCwg solution, since it cannot happen for CC solutions.

    \begin{example}\label{ex:ACCexample} In \cite{HMR} the following (\emph{additive charge conserving}) solution is found:
       \[ R=\left[
\begin{array}{ccccccccc}
1 & \cdot & \cdot & \cdot & \cdot & \cdot & \cdot & \cdot & \cdot 
\\
 \cdot & 1 & \cdot & \cdot & \cdot & \cdot & \cdot & \cdot & \cdot 
\\
 \cdot & \cdot & a  & \cdot & {x_1}  & \cdot & b  & \cdot & \cdot 
\\
 \cdot & \cdot & \cdot & 1 & \cdot & \cdot & \cdot & \cdot & \cdot 
\\
 \cdot & \cdot & \frac{{x_3} \left(a -1\right)}{b} & \cdot & \frac{{x_1} {x_3} +b}{b} & \cdot & {x_3}  & \cdot & \cdot 
\\
 \cdot & \cdot & \cdot & \cdot & \cdot & 1 & \cdot & \cdot & \cdot 
\\
 \cdot & \cdot & \frac{{x_3}^{2} {x_1}^{2}}{b^{3}} & \cdot & -\frac{{x_1} \left(a b +{x_1} {x_3} \right)}{a \,b^{2}} & \cdot & -\frac{{x_3} {x_1}}{a b} & \cdot & \cdot 
\\
 \cdot & \cdot & \cdot & \cdot & \cdot & \cdot & \cdot & 1 & \cdot 
\\
 \cdot & \cdot & \cdot & \cdot & \cdot & \cdot & \cdot & \cdot & 1 
\end{array}\right]\] where $\cdot$ represents $0$ and the parameters satisfy \[
x_3^{2} x_1^{2} a 
+ a^{2} b^{2}
+ x_1  b  x_3 a 
- a \,b^{2}
- x_1 b x_3
\;\; = \;\;
b^2 a (a-1) + b x_3 x_1 (a-1) + a x_3^2 x_1^2.
\]
The eigenvalues are $1$ (multiplicity 8) and $- \left(\frac{x_1 x_3 }{b}\right)^2$ (once).  From \cite[Paragraph (A.5)]{MartinRowell} we deduce that no (specialisation of generic) rank $3$ charge conserving solution may have these eigenvalue multiplicities.  In detail, those solutions corresponding to single 
bi-coloured Young diagram generically have 2 eigenvalues but never with multiplicities 8 and 1, while the remaining solutions all have eigenvalues of the form $\pm \lambda$, which is generically impossible for the $R$ above.
    \end{example}

\section{When $\infty$-Equivalence is Just Too Much}

{In this section we recall several notions of equivalence weaker than \infeq, and compare them. We zero in on one that has a categorical manifestation and illustrate its nuances with an example. }

\medskip

An ordinary  (not necessarily monoidal) functor  $F: \Braid \rightarrow \Mat$ is essentially the same thing as 
an arbitrary sequence  $\rho = \{ \rho_n \}_n$  of representations of braid groups,  
i.e. more explicitly, a pair of sequences 
$( \{N_n \in \N\}_n ,\,  \{  \rho_n : B_n \rightarrow \Mat(N_n,N_n) \}_{n} )$. 
%
As noted for example in \cite{MRT25}, 
this 
kind of functor 
is too loose a notion to require an interesting solution. 

{In this section we consider sequences more strongly constrained than an arbitrary sequence, but  potentially  less strongly constrained than a full-on braid representation.}

\medskip 

{Firstly,  although arbitrary functors/sequence are too 
loosely constrained
to be interesting, 
various easy (to define) notions of equivalence can be given already at this level 
including,  note,  $\infty$-equivalence. 
Some further useful equivalences now follow. 
\\
}{For a sequence $ \rho = \{ \rho_n\}_n$ we define $A_n$ as the image of $\C B_n$ under $\rho_n$, that is, $A_n = \C B_n / \ker \rho_n$. 
}

{Two sequences $\rho, \rho'$ are: 
\\
{\bf{kernel equivalent}} if $\ker \rho_n = \ker \rho_n'$ for all $n$. 
\\
{\bf{spectrum equivalent}} if for every element $X \in \C B_n$ the spectrum of $\rho_n(X)$ 
equals that of $\rho'_n(X) $, including multiplicities, for all $n$. 
\\
{\bf{Morita-Schur-Weyl equivalent}} if 
the indecomposable content of 
$\rho_n$ and $\rho'_n $ differ only up to non-zero multiplicities.
\\
{\bf{mod-radical equivalent}} if the simple composition factors 
of $\rho_n$ and $\rho'_n$ 
(and their multiplicities) 
are the same for each $n$. 
\\
{\bf{weakly mod-radical equivalent}} if $A_n/J(A_n) = A'_n/J(A'_n)$ for each $n$, where $J(\mathscr{A})$ is the Jacobson radical of an algebra $\mathscr{A}$.
\\
}

{Note that  
$\infty$-equivalence implies spectrum equivalence, which is the same thing as mod-rad equivalence. 
Also $\infty$-equivalence implies  
MSW equivalence implies kernel equivalence. 
But kernel equivalence coincides with MSW equivalence 
if representations are semisimple (as for example if they are unitary). }


\medskip

In \cite{RWlocalization} sequences of unitary $B_n$ representations compatible with inclusions $B_n\rightarrow B_{n+1}$ are studied with the goal of determining when the corresponding towers of $B_n$ representations can be uniformly \emph{localised}--roughly meaning they can be encoded in a unitary solution to the YBE.  This is inspired by the problem in quantum information: \emph{when can a topological model \cite{FLKW} be simulated directly by means of a quantum circuit model?}
A categorical generalisation in terms of \emph{Braided Systems} is found in \cite{RowellBrSys}, as follows:

A braided system is a triple 
\[
\mathbf V=
\bigl(\{V_n\}_{n\geq2},
      \{\rho_n\}_{n\geq2},
      \{\phi_n\}_{n\geq2}\bigr),
\]
where, for every $n\geq2$, 
\begin{enumerate}
\item $V_n$ is a finite-dimensional complex vector space;
\item
\(
\rho_n:\mathbb{C}B_n\longrightarrow\End(V_n)
\)
is a unital algebra homomorphism, and
\(
A_n:=\rho_n(\mathbb{C}B_n)\subseteq\End(V_n)
\)
is its image algebra;
\item
\(
\phi_n:A_n\hookrightarrow A_{n+1}
\)
is an injective unital algebra homomorphism satisfying
\[
\phi_n\circ\rho_n=\rho_{n+1}\circ\iota_n.
\]
\end{enumerate} 
where \(
\iota_n:\mathbb{C}B_n\longrightarrow\mathbb{C}B_{n+1}\)
is the  inclusion given by \(
\iota_n(\sigma_i)=\sigma_{i},
\).
\medskip \\
It will be clear that this is less strongly constrained that a braid representation, but more strongly than an arbitrary sequence.

    \begin{definition}
    

{The category $\BrSys$ has braided systems as objects; and morphisms as follows.}  
%
Let
\(
\mathbf V=(V_n,\rho_n,\phi_n)_{n\geq2},
\; \text{and} \;
\mathbf V'=(V_n',\rho_n',\phi_n')_{n\geq2}
\)
be objects of $\BrSys$, with image algebras
\[
A_n=\rho_n(\mathbb{C}B_n),
\qquad
A_n'=\rho_n'(\mathbb{C}B_n).
\]

A \emph{morphism}
\(
\Psi:\mathbf V\longrightarrow\mathbf V'
\)
is a family of unital algebra homomorphisms
\[
\psi_n:A_n\longrightarrow A_n'
\qquad n\geq2,
\]
such that
\(
\psi_n\circ\rho_n=\rho_n'
\) and 
\(\phi_n'\circ\psi_n=\psi_{n+1}\circ\phi_n.
\)
Composition and identities are defined level-wise.
\end{definition}

{Note that this construction indeed defines a category.}

\medskip 

 The second condition above for morphisms is redundant, but we retain it for our comfort (and to not stray too far from \cite{RWlocalization}).  Indeed, for
$x\in\mathbb{C}B_n$,
\[
\begin{aligned}
\phi_n'\psi_n\rho_n(x)
 &=\phi_n'\rho_n'(x)
 =\rho_{n+1}'(\iota_n(x))\\
 &=\psi_{n+1}\rho_{n+1}(\iota_n(x))
 =\psi_{n+1}\phi_n\rho_n(x).
\end{aligned}
\]
Since $\rho_n$ is surjective onto $A_n$, this proves 
the second condition:
\[
\phi_n'\psi_n=\psi_{n+1}\phi_n.
\]
Thus a morphism may equivalently be defined using only
$\psi_n\rho_n=\rho_n'$.

Note that this implies that $\BrSys$ is \emph{thin}, in the sense that the set of morphisms between $\mathbf V$ and $\mathbf V'$ is either empty or a singleton, since $\psi_n\circ \rho_n=\rho_n'$ determines each $\psi_n$.

\begin{proposition}
{Let $\mathbf V, \mathbf V'$ be as above.}
Set
\(
K_n:=\ker\rho_n\) and 
\(K_n':=\ker\rho_n'.
\)
Then:
\begin{enumerate}
\item a morphism $\mathbf V\to\mathbf V'$ exists if and only if
\[
K_n\subseteq K_n'
\qquad(n\geq2);
\]

\item when it exists, the morphism is unique and every $\psi_n$ is
surjective;

\item the objects $\mathbf V$ and $\mathbf V'$ are isomorphic in
$\BrSys$ if and only if
\[
K_n=K_n'
\qquad(n\geq2).
\]
\end{enumerate}
\end{proposition}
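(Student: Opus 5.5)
The whole argument rests on one observation: each $\rho_n:\mathbb{C}B_n\to A_n$ is surjective by definition of $A_n$, so $A_n\cong \mathbb{C}B_n/K_n$ as algebras. A morphism is therefore a family of algebra maps out of quotients, and everything reduces to the universal property of the quotient, applied level by level.

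For (1), suppose first that a morphism $\Psi=(\psi_n)$ exists. If $x\in K_n$, then $\rho_n'(x)=\psi_n(\rho_n(x))=\psi_n(0)=0$, so $K_n\subseteq K_n'$. Conversely, assume $K_n\subseteq K_n'$ for every $n$. Then $\rho_n'$ factors uniquely through $\mathbb{C}B_n/K_n\cong A_n$. Concretely, we define $\psi_n(\rho_n(x)):=\rho_n'(x)$. This is well defined: if $\rho_n(x)=\rho_n(y)$ then $x-y\in K_n\subseteq K_n'$, so $\rho_n'(x)=\rho_n'(y)$. Unitality and multiplicativity of $\psi_n$ follow at once from those of $\rho_n$ and $\rho_n'$. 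The compatibility condition $\phi_n'\psi_n=\psi_{n+1}\phi_n$ needs no separate check, since the redundancy computation displayed just before the proposition shows it follows from $\psi_n\rho_n=\rho_n'$. Hence $\Psi$ is a morphism.

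For (2), uniqueness is the thinness remark already made: surjectivity of $\rho_n$ means $\psi_n\circ\rho_n=\rho_n'$ pins down $\psi_n$ on all of $A_n$. Surjectivity of $\psi_n$ holds because its image is $\psi_n(\rho_n(\mathbb{C}B_n))=\rho_n'(\mathbb{C}B_n)=A_n'$.

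For (3), if $K_n=K_n'$ for all $n$, then (1) gives morphisms $\Psi:\mathbf V\to\mathbf V'$ and $\Psi':\mathbf V'\to\mathbf V$. Both composites $\Psi'\circ\Psi$ and $\Psi\circ\Psi'$ are endomorphisms, and by the uniqueness in (2) each must equal the identity; so $\mathbf V\cong\mathbf V'$. Conversely, an isomorphism provides morphisms in both directions, and (1) then gives $K_n\subseteq K_n'$ and $K_n'\subseteq K_n$.

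There is no real obstacle here. The only point needing care is the well-definedness of $\psi_n$ in (1), and that is exactly where the kernel inclusion is used.
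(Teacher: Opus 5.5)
Your proposal is correct and matches the paper's proof: both rely on the surjectivity of $\rho_n$, set $\psi_n(\rho_n(x)):=\rho_n'(x)$ (which is well defined exactly when $K_n\subseteq K_n'$), and obtain (3) from morphisms in both directions. Your use of thinness to force the composites to be identities is a rephrasing of the paper's ``mutually inverse canonical maps'' on $\mathbb{C}B_n/K_n$. Your explicit checks of multiplicativity and of compatibility with $\phi_n$ supply details the paper leaves implicit.
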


\begin{proof}
Any morphism $\Psi=\{\psi_n\}_{n\geq 2}$ must satisfy
\[
\psi_n\rho_n(x)=\rho_n'(x) \; \text{for all}\; x\in\C B_n \; \text{and} \; n\geq 2
\]
and is therefore uniquely determined.  This formula defines a
well-defined map precisely when
\[
\rho_n(x)=0\quad\Rightarrow\quad\rho_n'(x)=0,
\]
which is equivalent to $K_n\subseteq K_n'$.  The map is surjective
because $\rho_n'$ is surjective onto $A_n'$.

If $\mathbf V\cong\mathbf V'$, applying the preceding criterion to an
isomorphism and its inverse gives
\[
K_n\subseteq K_n'
\quad\text{and}\quad
K_n'\subseteq K_n.
\]
Hence $K_n=K_n'$.  Conversely, equality of the kernels gives mutually
inverse canonical maps
\[
\mathbb{C}B_n/K_n\longleftrightarrow\mathbb{C}B_n/K_n',
\]
and therefore an isomorphism in $\BrSys$.
\end{proof}

This motivates the following more concise:
\begin{definition}
Objects in $\BrSys$ are \textbf{equivalent} if 
\(
\ker\rho_n=\ker\rho_n'
\quad\text{for every }n\geq2
\),
{i.e. if their underlying sequences $\rho$ are kernel equivalent.}
\end{definition}
Observe that any YBO $R$ yields an object in $\BrSys$ taking $V_n=V^{\otimes n}$, 
$\rho_n=\rho_n^R$ 
and $\phi_n(f)=f\otimes I$.
This leads to a looser notion of equivalence than $\infty$-equivalence: we say $R$ and $S$ are \textbf{$\BrSys$-equivalent} if the corresponding objects in $\BrSys$ are equivalent.  

{The question raised in \cite{RWlocalization} is: when is an object in $\BrSys$ associated with an object $X$ in a (unitary) braided fusion category equivalent to an object coming from a (unitary) YBO $R$?  The conjecture is that this is rare, occurring precisely when the quantum dimension of $X$ is the square root of an integer.  Combining with the \emph{property F} conjecture \cite{NaiduRowell} this suggest a no-go theorem: localisability precludes braiding-universality, thus making direct simulation of topological quantum computers and braiding universality incompatible.}

Recall that $\infty$-equivalence {of YBOs $R$ and $S$, say,} gives a family $\{T_n\}_{n\geq 2}$ 
{of isomorphisms}
which are perfectly suitable choices for $\psi_n$.  On the other hand, 
we may have $\BrSys$-equivalent $R$ and $S$ solutions \emph{of different ranks}:

\begin{example}\label{ex:BrSysequiv}
    Consider the solution $R$ of Example \ref{ex:ACCexample} and \[R'=\left[ \begin {array}{cccc} 1&0&0&0\\ \noalign{\medskip}0&1+x&-x&0
\\ \noalign{\medskip}0&1&0&0\\ \noalign{\medskip}0&0&0&1\end {array}
 \right] \] with $x=- \left(\frac{x_1 x_3 }{b}\right)^2.$  After rescaling by $-1$, both $R$ and $R'$ yield faithful representations of the tower of algebras $TL_n(-x)$ see \cite{HMR} and \cite{Martin92} for proofs of these facts.  Therefore $R$ and $R'$ are $\BrSys$-equivalent.
\end{example}

\ignore{{
\subsection{Even Less Discerning Equivalences}

\ecr{mod-rad equivalence, $\infty$-Morita? equivalence, discussion of semisimplicity etc? }

Consider two YBOs $R$ and $S$, and let $A_n^R$ and $A_n^S$ be the corresponding images of $\C B_n$ under $\rho_n^R$ and $\rho_n^S$.   
We will say $R$ and $S$ are
\begin{itemize}
    \item  \textbf{Morita equivalent} if $A_n^R$ and $A_n^S$ are Morita equivalent for each $n$,
    \item \textbf{mod-rad equivalent} if $A_n^R/J(A_n^R)\cong A_n^S/J(A_n^S)$ as algebras for each $n$,
    \item \textbf{mod-rad-Morita equivalent} if $A_n^R/J(A_n^R)$ and $ A_n^S/J(A_n^S)$ are Morita equivalent for each $n$.
\end{itemize}

\section{\ecr{The Dark Web Sections...}}
\medskip 
----------------------
\medskip

$\aleph\beth\gimel\daleth$ \ecr{beyond here didn't make the cut or as yet unincorporated.}

\medskip

\ecr{I like this part in principle, but I am having trouble separating what is motivation for \CCWG\ and what is more in the spirit of the last section.}

\ppmm{In this paper we \ecr{have been} concerned with 
the challenge of 
classifying YBOs/braid representations. 
This is part of the larger problem of classifying functors from a given source linear strict monoidal category to another given such target category - 
an aspect of higher representation theory. These sources are generalisations of algebras, in the sense that their endomorphism monoids are algebras, so they include collections of algebras. And higher representation theory includes 
corresponding  collections of representations of these algebras. 
It is natural therefore to try to understand higher representation theory in terms similar to the way in which ordinary representation theory is understood - via Jordan--Holder, Krull--Schmidt and Artin--Wedderburn theorems (and so on). This is not entirely possible. But the fundamental notions and utilities of these devices give us a language for describing representations. 
For example, we say that a YBO/braid representation is {\em semisimple} if every given braid group representation is semisimple. 
Thus two YBOs are not \infeq\ if one is semisimple and the other is not. 
Further, two braid representations cannot be \infeq\ unless the multiplicities of irreducible representations in a Jordan--Holder series in each little-rank is the same. 
And if both are semisimple this condition is both necessary and sufficient. 
Observe that, as with Artin--Wedderburn, we have immediately a partial classification of braid representations according to the multiplicities of irreducible representations across all little-ranks  - two YBOs are {\bf mod-radical} equivalent if this data is the same. 
One problem with this way of thinking about representations is that it can require an enormous amount of computation to determine the data - possibly even in each little-rank, but certainly across all ranks. So here in practice we will be concerned with approaches to classification that can be applied algorithmically and holistically (just as Schur--Weyl duality can unify the representation theory of many algebras as being controlled by a single quantum group).}

Observe that $R$ unitary implies $R$ semisimple, 
\ppm{[-in what sense? meaning all $\rho_n$ semisimple?]}
but the converse does not hold. 
Very simplistically, the charge-conserving condition (see \S\ref{ss:secretly} below for a review) is a loose analogue of ordinary semisimplicity 
(although charge-conserving does not imply semisimplicity in the higher sense). 
If one had succeeding to classify finite-dimensional semisimple algebras (note that the AW theorem essentially achieves this - albeit without making it easy to determine which class a given algebra is in) then the next step would be to classify algebras with radical (a far harder  
problem in the ordinary setting). This is where \CCwg\ comes in. Charge-conserving with glue, \CCwg, is the generalisation of CC introduced in \cite{AlmateariMartinRowell} with the aim of increasing the proportion of solutions that lie in the `classifiable' sector. 
Notation is recalled in \S\ref{ss:secretly}. But it also has the property that every \CCwg\ solution has a quotient (by nilpotent things) which is CC, so \CCwg\ can piggyback on the CC classification. 

\medskip

\section{Some equivalences you may have suspected}

\medskip 

(In \cite{MRT25} a notion of Morita $\infty$-equivalence is touched upon,  
and it is possible in 
principle for braid representations of different ranks to be so equivalent. But we will demote
this aspect 
to \S\ref{ss:RowellWangLoc}.)

\ppm{In this section we focus on \infeqe. We first give a quick review of the various equivalences at our disposal, and their motivations; 
then in \S\ref{ss:secretly} } \ecr{we provide a criterion for local equivalence }

\subsection{Poorly-Disguised $\infty$-Equivalences}

\subsection{Dueling Equivalences: Galois and Doiku-Smoktunowitz}

\section{Relating further useful notions of equivalence to \infeqe}  
There are many symmetries that we can employ to organise a classification program for some limited classes of Yang-Baxter operators.  Transpose, scalar multiplication, field automorphisms are examples of symmetries of Yang-Baxter operators that do not typically preserve $\infty$-equivalence classes. 
Modulo these symmetries, equivalences we find that there is only one solution outside the $\infty$-class of charge-conserving-with-glue framework in rank $2$. 

\subsection{Rowell--Wang localisation and related notions of equivalence}   \label{ss:RowellWangLoc}

}}

\bibliographystyle{abbrv}
\bibliography{more.bib}  

@book{adamek2004abstract,
  title={Abstract and concrete categories. The joy of cats},
  author={Ad{\'a}mek, Ji{\v{r}}{\'\i} and Herrlich, Horst and Strecker, George E},
  year={2004},
  publisher={Citeseer}
}

@article{AlmateariMartinRowell,
    author = {S Almateari and P Martin and E C Rowell},
    title = {On Higher Representation Theory via Categories
of type Charge-Conserving–with–Glue},
journal={arXiv 2601.18452},
    eprint       = {2601.18452},
  archivePrefix= {arXiv},
    year = {2026} 
}

@article{NaiduRowell,
  author  = {Naidu, Deepak and Rowell, Eric C.},
  title   = {A Finiteness Property for Braided Fusion Categories},
  journal = {Algebras and Representation Theory},
  volume  = {14},
  number  = {5},
  pages   = {837--855},
  year    = {2011},
  doi     = {10.1007/s10468-010-9219-5}
}

@article{AkgunMerebVendramin2022,
  author  = {Akg{\"u}n, {\"O}zg{\"u}r and Mereb, Mart{\'i}n and Vendramin, Leandro},
  title   = {Enumeration of set-theoretic solutions to the {Yang--Baxter} equation},
  journal = {Mathematics of Computation},
  volume  = {91},
  number  = {335},
  pages   = {1469--1481},
  year    = {2022},
  doi     = {10.1090/mcom/3696},
  eprint  = {2008.04483},
  archivePrefix = {arXiv},
  primaryClass  = {math.QA}
}

@article{FLKW,
  author  = {Freedman, Michael H. and Kitaev, Alexei and Larsen, Michael J.
             and Wang, Zhenghan},
  title   = {Topological quantum computation},
  journal = {Bull. Amer. Math. Soc. (N.S.)},
  volume  = {40},
  number  = {1},
  year    = {2003},
  pages   = {31--38},
  doi     = {10.1090/S0273-0979-02-00964-3}
 
}

@article{Hietarinta92,
  author  = {Hietarinta, Jarmo},
  title   = {All solutions to the constant quantum {Yang}–{Baxter} equation in two dimensions},
  journal = {Physics Letters A},
  volume  = {165},
  number  = {3},
  pages   = {245--251},
  year    = {1992},
  doi     = {10.1016/0375-9601(92)90044-M},
  eprint  = {hep-th/9210067},
  archivePrefix = {arXiv},
  primaryClass  = {hep-th},
}

@book{maclane2013categories,
  title={Categories for the working mathematician},
  author={Mac~Lane, Saunders},
  OPTvolume={5},
  year={2013},
  publisher={Springer Science \& Business Media}
}

@article{MRT25,
      title={A Categorical Perspective on Braid Representations}, 
      author={P. P. Martin and E. C. Rowell and F. Torzewska},
      year={2025}, 
journal={arXiv 2506.07950},
      eprint={2506.07950},
      archivePrefix={arXiv},
      primaryClass={math.QA},
      url={https://arxiv.org/abs/2506.07950}, 
OPTcomment={this was a misc type entry but the arxiv no. was not showing, so I bodged it to this...}
}

@article{GJ,
  author  = {David M. Goldschmidt and V. F. R. Jones},
  title   = {Metaplectic link invariants},
  journal = {Geom. Dedicata},
  volume  = {31},
  number  = {2},
  year    = {1989},
  pages   = {165--191}
}

@article {GalindoRowell,
    AUTHOR = {Galindo, C\'esar and Rowell, Eric C.},
     TITLE = {Braid representations from unitary braided vector spaces},
   JOURNAL = {J. Math. Phys.},
  FJOURNAL = {Journal of Mathematical Physics},
    VOLUME = {55},
      YEAR = {2014},
    NUMBER = {6},
     PAGES = {061702, 13},
      ISSN = {0022-2488,1089-7658},
   MRCLASS = {20F36 (16T25 20C15)},
  MRNUMBER = {3390645},
MRREVIEWER = {Michael\ P.\ Allocca},
       DOI = {10.1063/1.4880196},
       URL = {https://doi-org.srv-proxy1.library.tamu.edu/10.1063/1.4880196},
}

@article{MartinRowell,
  author  = {Martin, Paul and Rowell, Eric C.},
  title   = {Classification of Spin-Chain Braid Representations},
  journal = {Communications in Mathematical Physics},
  volume  = {407},
  pages   = {201},
  year    = {2026},
  doi     = {10.1007/s00220-026-05716-z}
}

@article{DS,
  title={Set-theoretic {Y}ang--{B}axter \& reflection equations and quantum group symmetries},
  author={Doikou, Anastasia and Smoktunowicz, Agata},
  journal={Letters in Mathematical Physics},
  volume={111},
  pages={1--40},
  year={2021},
  publisher={Springer}
}

@article{HMR,
  author  = {Hietarinta, Jarmo and Martin, Paul and Rowell, Eric C.},
  title   = {Solutions to the Constant {Y}ang--{B}axter Equation: Additive Charge Conservation in Three Dimensions},
  journal = {Proceedings of the Royal Society A: Mathematical, Physical and Engineering Sciences},
  volume  = {480},
  number  = {2294},
  pages   = {20230810},
  year    = {2024},
  doi     = {10.1098/rspa.2023.0810},
  arxiv   = {2310.03816},
}

@article{RWlocalization,
  author  = {Rowell, Eric C. and Wang, Zhenghan},
  title   = {Localization of Unitary Braid Group Representations},
  journal = {Communications in Mathematical Physics},
  volume  = {311},
  number  = {3},
  pages   = {595--615},
  year    = {2012},
  doi     = {10.1007/s00220-011-1386-7},
  arxiv   = {1009.0241},
}

@incollection{RowellBrSys,
  author    = {Rowell, Eric C.},
  title     = {Braiding Circuits, Localisation, and Property {F}},
  booktitle = {Mini-Workshop: The Yang--Baxter Equation and
               Representations of Braid Groups},
  series    = {Oberwolfach Reports},
  volume    = {22},
  number    = {4},
  year      = {2025},
  pages     = {2662--2665},
  publisher = {EMS Press},
  doi       = {10.4171/OWR/2025/49},
  url       = {https://doi.org/10.4171/OWR/2025/49}
}

@article{Gur,
  author  = {Gurevich, D. I.},
  title   = {The {Yang--Baxter} equation and a generalization of
             formal {Lie} theory},
  journal = {Soviet Math. Dokl.},
  volume  = {33},
  number  = {3},
  pages   = {758--762},
  year    = {1986},
  note    = {English translation of Dokl. Akad. Nauk SSSR
             288 (1986), no. 4, 797--801},
  mrnumber = {852270}
}

@article{LPW,
  author  = {Lechner, Gandalf and Pennig, Ulrich and Wood, Simon},
  title   = {{Yang--Baxter} representations of the infinite
             symmetric group},
  journal = {Advances in Mathematics},
  volume  = {355},
  pages   = {106769},
  year    = {2019},
  doi     = {10.1016/j.aim.2019.106769},
  eprint  = {1707.00196},
  archivePrefix = {arXiv},
  primaryClass  = {math.QA}
}

@incollection {Martin92,
    AUTHOR = {Martin, Paul Purdon},
     TITLE = {On {S}chur-{W}eyl duality, {$A_n$} {H}ecke algebras and
              quantum {${\rm sl}(N)$} on {$\bigotimes^{n+1}{\bf C}^N$}},
 BOOKTITLE = {Infinite analysis, {P}art {A}, {B} ({K}yoto, 1991)},
    SERIES = {Adv. Ser. Math. Phys.},
    VOLUME = {16},
     PAGES = {645--673},
 PUBLISHER = {World Sci. Publ., River Edge, NJ},
      YEAR = {1992},
      ISBN = {981-02-0955-X},
   MRCLASS = {16S50 (16S80 17B37 81R50)},
  MRNUMBER = {1187568},
MRREVIEWER = {Jie\ Du},
       DOI = {10.1142/S0217751X92003975},
       URL = {https://doi-org.srv-proxy1.library.tamu.edu/10.1142/S0217751X92003975},
}

@article{Korepinetal,
  author        = {Maity, Somnath and Singh, Vivek Kumar and Padmanabhan, Pramod and Korepin, Vladimir},
  title         = {Algebraic Classification of {H}ietarinta's Solutions of {Yang-Baxter} Equations: Invertible {$4\times 4$} Operators},
  journal       = {Journal of High Energy Physics},
  volume        = {2024},
  number        = {12},
  pages         = {67},
  year          = {2024},
  doi           = {10.1007/JHEP12(2024)067},
  archivePrefix = {arXiv},
  eprint        = {2409.05375},
  primaryClass  = {hep-th}
}

@article {ESS,
    AUTHOR = {Etingof, Pavel and Schedler, Travis and Soloviev, Alexandre},
     TITLE = {Set-theoretical solutions to the quantum {Y}ang-{B}axter
              equation},
   JOURNAL = {Duke Math. J.},
  FJOURNAL = {Duke Mathematical Journal},
    VOLUME = {100},
      YEAR = {1999},
    NUMBER = {2},
     PAGES = {169--209},
      ISSN = {0012-7094,1547-7398},
   MRCLASS = {16W35 (81R50)},
  MRNUMBER = {1722951},
MRREVIEWER = {E.\ J.\ Taft},
       DOI = {10.1215/S0012-7094-99-10007-X},
       URL = {https://doi.org/10.1215/S0012-7094-99-10007-X},
}
\end{document}